\documentclass[11pt]{article}

\usepackage{amssymb,amsmath,amsfonts,amsthm}
\usepackage{latexsym}
\usepackage{graphics}
\usepackage{indentfirst}

\setlength{\textwidth}{15.5cm} \setlength{\headheight}{0.5cm} \setlength{\textheight}{21.5cm}
\setlength{\oddsidemargin}{0.25cm} \setlength{\evensidemargin}{0.25cm} \setlength{\topskip}{0.5cm}
\setlength{\footskip}{1.5cm} \setlength{\headsep}{0cm} \setlength{\topmargin}{0.5cm}

\newenvironment{customthm}[1]
  {\innercustomthm}
  {\endinnercustomthm}

\newenvironment{customcor}[1]
  {\innercustomcor}
  {\endinnercustomcor}
\newtheorem*{thm*}{Theorem}
\newtheorem{thm}{Theorem}
\newtheorem{lem}[thm]{Lemma}
\newtheorem{pro}[thm]{Proposition}

\newtheorem{cor}[thm]{Corollary}
\newtheorem{conj}[thm]{Conjecture}

\newtheorem{ques}[thm]{Question}

\newcommand{\N}{\mathbb{N}}

\newcommand{\floor}[1]{\left\lfloor#1\right\rfloor}
\newcommand{\ceil}[1]{\left\lceil#1\right\rceil}

\begin{document}

\title{Proportional Choosability: A New List Analogue of Equitable Coloring}

\author{Hemanshu Kaul\footnotemark[1], Jeffrey A. Mudrock\footnotemark[1], Michael J. Pelsmajer\footnotemark[1], and Benjamin Reiniger\footnotemark[1]}

\footnotetext[1]{Department of Applied Mathematics, Illinois Institute of Technology, Chicago, IL 60616.  E-mail:  {\tt {kaul@iit.edu, jmudrock@hawk.iit.edu, pelsmajer@iit.edu, breiniger@iit.edu.}}}

\date{2018}

\maketitle

\begin{abstract}
In 2003, Kostochka, Pelsmajer, and West introduced a list analogue of
equitable coloring called equitable choosability.  In this paper, we
motivate and define a new list analogue of equitable coloring called
proportional choosability.  A $k$-assignment $L$ for a graph $G$
specifies a list $L(v)$ of $k$ available colors for each vertex $v$ of
$G$.  An $L$-coloring assigns a color to each vertex $v$ from its list
$L(v)$.  For each color $c$, let $\eta(c)$ be the number of vertices
$v$ whose list $L(v)$ contains $c$.  A {\em proportional $L$-coloring}
of $G$ is a proper $L$-coloring in which each color $c \in \bigcup_{v \in
V(G)} L(v)$ is used $\lfloor \eta(c)/k \rfloor$ or $\lceil \eta(c)/k
\rceil$ times.  A graph $G$ is \emph{proportionally $k$-choosable} if
a proportional $L$-coloring of $G$ exists whenever $L$ is a
$k$-assignment for $G$.  We show that if a graph $G$ is proportionally
$k$-choosable, then every subgraph of $G$ is also proportionally
$k$-choosable and also $G$ is proportionally $(k+1)$-choosable, unlike
equitable choosability for which analogous claims would be false.  We
also show that any graph $G$ is proportionally $k$-choosable whenever
$k \geq \Delta(G) + \lceil |V(G)|/2 \rceil$, and we use matching
theory to completely characterize the proportional choosability of
stars and the disjoint union of cliques.   

\medskip

\noindent {\bf Keywords.} graph coloring, equitable coloring, list coloring, equitable choosability.

\noindent \textbf{Mathematics Subject Classification.} 05C15.

\end{abstract}

\section{Introduction}\label{intro}

In this paper we define and study a new list analogue of equitable coloring.  All graphs are assumed to be finite, simple graphs unless otherwise noted.  Generally speaking we follow West~\cite{W01} for basic terminology and notation.

\subsection{Equitable Coloring}


The study of equitable coloring began with a conjecture of Erd\H{o}s~\cite{E64} in~1964 (Theorem~\ref{thm: HS}, proved in 1970~\cite{HS70}), but the general concept was formally introduced by Meyer~\cite{M73} in~1973.  An \emph{equitable $k$-coloring} of a graph $G$ is a proper $k$-coloring of $G$, $f$, such that the sizes of the color classes differ by at most one (where a $k$-coloring has exactly $k$, possibly empty, color classes).  It is easy to see that for an equitable $k$-coloring, the color classes associated with the coloring are each of size $\lceil |V(G)|/k \rceil$ or $\lfloor |V(G)|/k \rfloor$.  We say that a graph $G$ is \emph{equitably $k$-colorable} if there exists an equitable $k$-coloring of $G$. Equitable colorings are useful when it is preferable to form a proper coloring without under-using any colors or using any color excessively often. Equitable coloring has found many applications ( see for example~\cite{T73,P01,KJ06,JR02}).

Unlike ordinary graph coloring, increasing the number of colors may make equitable coloring more difficult.  For example, $K_{3,3}$ is equitably 2-colorable, but it is not equitably 3-colorable.  More generally, $K_{2m+1,2m+1}$ is equitably $k$-colorable for each even $k$ less than $2m+1$, it is not equitably $(2m+1)$-colorable, and it is equitably $k$-colorable for each $k \geq 2m+2 = \Delta(K_{2m+1,2m+1})+1$ where we use $\Delta(G)$ to denote the largest degree of a vertex in $G$ (see~\cite{LW96} for further details).  In 1970, Hajn\'{a}l and Szemer\'{e}di proved the following general result.

\begin{thm}[\cite{HS70}] \label{thm: HS}
Every graph $G$ has an equitable $k$-coloring when $k \geq \Delta(G)+1$.
\end{thm}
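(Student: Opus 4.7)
The plan is to induct on $n = |V(G)|$ after a normalization step. By padding $G$ with isolated vertices I may assume $k$ divides $n$; this preserves $\Delta(G) \leq k-1$, and an equitable $k$-coloring of the padded graph restricts to one of $G$. The goal then reduces to partitioning $V(G)$ into $k$ independent sets, each of size exactly $q := n/k$.

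For the induction I would strengthen the target and first produce an ``almost equitable'' proper $k$-coloring $V_1, \ldots, V_k$ in which $|V_i| \in \{q-1, q, q+1\}$ with at most one class at each extreme. A proper $k$-coloring exists greedily since $\Delta(G) \leq k-1$, and the real task is iterative rebalancing. The crucial repair step is the following: given a ``bad'' coloring with $|V_1| = q+1$ and $|V_2| = q-1$, find a chain $v_1 \in V_1, v_2 \in V_{i_2}, \ldots, v_t \in V_{i_t}$ such that moving $v_j$ from its current class into $V_{i_{j+1}}$ (with $V_{i_{t+1}} := V_2$) keeps the coloring proper at each step; this shifts one vertex from $V_1$ into $V_2$ without changing any other class size. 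The existence of such a chain corresponds to a directed path from $V_1$ to $V_2$ in an auxiliary digraph $D$ on class names whose arcs $V_i \to V_j$ record that some vertex of $V_i$ has no neighbor in $V_j$.

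The main obstacle is proving that this path exists. If not, let $R$ be the set of classes reachable from $V_1$ in $D$; then $V_1 \in R$, $V_2 \notin R$, and for every $V_i \in R$ and every $V_j \notin R$, every vertex of $V_i$ has a neighbor in $V_j$. The crude consequence that every vertex in a class of $R$ has at least $k - |R|$ neighbors is too weak on its own; one must refine $R$ into ``solvable'' and ``terminal'' sub-collections of classes and double-count edges across the cut more carefully, ultimately forcing a vertex of degree $\geq k$ in contradiction with $\Delta(G) \leq k-1$. This is the delicate combinatorial core of the Hajn\'{a}l--Szemer\'{e}di argument, and is the step I expect to demand the most work; the streamlined version due to Kierstead and Kostochka offers the cleanest execution of this counting.
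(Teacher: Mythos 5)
The paper does not prove this statement at all: Theorem~\ref{thm: HS} is the Hajn\'{a}l--Szemer\'{e}di theorem, quoted from~\cite{HS70} as a known result, so there is no internal proof to compare against. Judged on its own terms, your proposal correctly identifies the architecture of the modern proofs (the Kierstead--Kostochka style argument): normalize so that $k$ divides $n$, build a nearly equitable proper coloring, and repair imbalances by shifting single vertices along directed paths in an auxiliary digraph on the color classes, where an arc $V_i \to V_j$ records a vertex of $V_i$ with no neighbor in $V_j$. Two remarks on the parts you did write out. First, the padding reduction needs one more line: an equitable coloring of the padded graph may place two added isolated vertices in the same class, so you must first redistribute the (isolated, hence freely movable) padding vertices into distinct classes before deleting them. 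Second, and decisively, the proposal is not a proof: the entire content of the theorem lives in the step you defer. You correctly observe that if no augmenting path from the oversized class reaches the undersized class, then every vertex in a reachable class has a neighbor in every non-reachable class, and that the resulting degree bound $k - |R|$ is too weak. But you then simply assert that a refinement of $R$ into ``solvable'' and ``terminal'' sub-collections plus a more careful edge count forces a vertex of degree at least $k$. That refinement --- defining which classes are movable, handling the case where the reachable set collapses, and executing the double count --- is precisely the delicate core of Hajn\'{a}l--Szemer\'{e}di, and naming it is not the same as doing it. As submitted, the argument establishes nothing beyond the (easy) existence of a proper $k$-coloring; the equitability claim rests entirely on the unproven path-existence lemma.
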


In 1994, Chen, Lih, and Wu~\cite{CL94} made a conjecture in the spirit of Brooks's Theorem.  Their conjecture is known as the $\Delta$-Equitable Coloring Conjecture ($\Delta$-ECC for short).

\begin{conj}[\cite{CL94}, {\bf $\Delta$-ECC}] \label{conj: ECC}
A connected graph $G$ is equitably $\Delta(G)$-colorable if it is different from $K_m$, $C_{2m+1}$, and $K_{2m+1,2m+1}$.
\end{conj}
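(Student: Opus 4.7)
\noindent The statement (the $\Delta$-Equitable Coloring Conjecture) is a well-known open problem: it is to equitable coloring what Brooks's theorem is to ordinary coloring, strengthening Theorem~\ref{thm: HS} by one color for all but three classes of exceptions. My plan is to combine induction on $|V(G)|$ with a Brooks-style Kempe-chain argument that attempts to eliminate one color class from a Hajn\'{a}l--Szemer\'{e}di coloring.

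First, I would apply Theorem~\ref{thm: HS} to obtain an equitable $(\Delta(G)+1)$-coloring $f$ with color classes $C_0, C_1, \dots, C_{\Delta(G)}$. The goal is to relocate every vertex of $C_0$ to one of the other classes so that properness is preserved and all remaining classes have sizes differing by at most one. Each $v \in C_0$ has at most $\Delta(G)$ neighbors distributed among $\Delta(G)$ other classes, so by pigeonhole there is some $C_i$ containing no neighbor of $v$, and $v$ could in principle be moved to $C_i$. The problem reduces to a Hall-type assignment: find a system of target classes so that the new class sizes are all $\lceil |V(G)|/\Delta(G) \rceil$ or $\lfloor |V(G)|/\Delta(G) \rfloor$.

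Where this fails, the argument should force $G$ to be one of the forbidden graphs $K_m$, $C_{2m+1}$, or $K_{2m+1,2m+1}$. When the Hall-type condition is violated, there is some set $S \subseteq C_0$ whose union of admissible target classes is too small, meaning each vertex in $S$ is adjacent to ``most'' vertices in ``most'' other classes; a careful structural analysis of $S$ and its neighborhoods, in the spirit of the Kierstead--Kostochka proof of Hajn\'{a}l--Szemer\'{e}di and of the case analysis in Brooks's theorem, should match $S$ against one of the three exceptional structures. In parallel, an induction on $|V(G)|$ handles reducible configurations: find a vertex $v$ of degree less than $\Delta(G)$, equitably $\Delta(G)$-color $G - v$ by the inductive hypothesis, then place $v$ into a class containing no neighbor of $v$ while preserving balance --- perhaps after a single Kempe-chain swap to correct sizes.

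The main obstacle --- and the reason this conjecture has resisted proof for decades --- is that Kempe-chain swaps and vertex reassignments can disrupt the size balance of many classes simultaneously, so maintaining equitability during the rebalancing is far subtler than maintaining just properness. A complete argument would require a global matching or flow argument to certify the rebalancing, and the exceptional cases should correspond exactly to the obstructions of that argument. I expect that a complete proof in full generality is out of reach of this sketch, and that special cases (small $\Delta(G)$, bipartite graphs, planar graphs of large $\Delta$) would need to be handled by separate ad hoc arguments in a complete attack.
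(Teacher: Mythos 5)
You have correctly identified that this statement is not a theorem of the paper at all: it is the $\Delta$-Equitable Coloring Conjecture of Chen, Lih, and Wu, which the paper merely cites as an open problem (verified only for special classes such as interval graphs, trees, outerplanar graphs, and subcubic graphs). The paper contains no proof of it, so there is nothing to compare your argument against, and no proof is expected here.

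As for your sketch itself, the gap is exactly where you say it is, but it is worth being precise about why the natural plan collapses. Starting from a Hajn\'{a}l--Szemer\'{e}di $(\Delta(G)+1)$-coloring and trying to empty the class $C_0$, the pigeonhole observation only guarantees that each $v \in C_0$ \emph{individually} has an admissible target class; it says nothing about whether the multiset of targets can be chosen so that the resulting $\Delta(G)$ classes are balanced. The Hall-type condition you would need is on the bipartite ``vertex versus class'' graph weighted by the residual capacities $\lceil |V(G)|/\Delta(G)\rceil - |C_i|$, and when it fails you do not directly get one of the three exceptional graphs --- you get a dense local configuration that must then be repaired by Kempe-type exchanges, and those exchanges can unbalance classes far from $S$. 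This cascading imbalance is precisely the obstruction that the known partial results (and the Kierstead--Kostochka work on the $\Delta+1$ case) spend most of their effort controlling, and no general mechanism for it is known. Your proposal is honest about this, so it should be read as a correct framing of the difficulty rather than as a proof attempt with a fixable hole.
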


Conjecture~\ref{conj: ECC} has been proven true for interval graphs, trees, outerplanar graphs, subcubic graphs, and several other classes of graphs (see~\cite{CL94}, \cite{L98}, and~\cite{YZ97}).

\subsection{Equitable Choosability}

In this subsection, we briefly review a well known list coloring analogue of equitable coloring. This will motivate a new list analogue of equitable coloring which we will present afterward and will be the focus of this paper. 

In 2003, Kostochka, West, and the third author introduced a list version of equitable coloring~\cite{KP03}, equitable choosability, which has received some attention in the literature.  A list assignment $L$ for a graph $G$ is a function which associates with each vertex of $G$ a list of colors.  When $|L(v)|=k$ for all $v \in V(G)$ we say that $L$ is a \emph{k-assignment} for $G$.  We say $G$ is \emph{$L$-colorable} if there exists a proper coloring $f$ of $G$ such that $f(v) \in L(v)$ for each $v \in V(G)$ (and $f$ is called a \emph{proper $L$-coloring} of $G$).  We say $G$ is \emph{$k$-choosable} if a proper $L$-coloring of $G$ exists whenever $L$ is a $k$-assignment for $G$.

Suppose $L$ is a $k$-assignment for the graph $G$.  A proper $L$-coloring of $G$ is \emph{equitable} if each color appears on at most $\lceil |V(G)|/k \rceil$ vertices.  Such a coloring is called an \emph{equitable $L$-coloring} of $G$, and we call $G$ \emph{equitably $L$-colorable} when an equitable $L$-coloring of $G$ exists.  We say $G$ is \emph{equitably $k$-choosable} if $G$ is equitably $L$-colorable whenever $L$ is a $k$-assignment for $G$.  Note that $\lceil |V(G)|/k \rceil$ is the same upper bound on the size of color classes in typical equitable coloring.  So, for the notion of equitable choosability, no color is used excessively often.

It is conjectured in~\cite{KP03} that Theorem~\ref{thm: HS} and the $\Delta$-ECC hold in the list context.

\begin{conj}[\cite{KP03}] \label{conj: KPW1}
Every graph $G$ is equitably $k$-choosable when $k \geq \Delta(G)+1$.
\end{conj}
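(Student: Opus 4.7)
The plan is to emulate the classical Hajn\'{a}l--Szemer\'{e}di proof of Theorem~\ref{thm: HS} in the list setting, adapting the recoloring framework to honor the per-vertex list constraints. Since $k \ge \Delta(G)+1$, degree-choosability guarantees some proper $L$-coloring $f$ of $G$. The goal is to transform $f$ step by step into an equitable $L$-coloring by repeatedly lowering the size of the largest color class whenever it exceeds $\lceil |V(G)|/k \rceil$.

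First I would set up an auxiliary ``recoloring digraph'' $H_f$ on $V(G)$: include an arc $u \to v$ whenever changing the color of $u$ to $f(v)$ is list-legal (i.e., $f(v) \in L(u)$) and creates no conflict with $u$'s other neighbors. A color class of $f$ that is too large acts as a source of excess, and a too-small class as a sink. The core step is to exhibit an augmenting trail in $H_f$ from an overfull class to an underfull class along which simultaneous recoloring preserves properness and $L$-legality while shifting one unit of mass in the desired direction. If such a trail can always be found, iterating produces an equitable $L$-coloring.

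The hard part is arguing that such an augmenting trail must exist. In the non-list setting, Hajn\'{a}l--Szemer\'{e}di handles obstructions through an intricate ``solvability'' analysis of the auxiliary digraph, exploiting the fact that any vertex colored $c$ has at most $\Delta$ neighbors and therefore at least $k-\Delta$ other colors available; in the list version, such colors must additionally lie in $L(v)$, and they may not overlap well with the desired target class. To compensate, I would attempt a two-stage strategy: first, decompose the problem along levels of a BFS from the sink class, in the spirit of the Kierstead--Kostochka short proof of Theorem~\ref{thm: HS}; second, at each level use a Hall-type matching condition on the bipartite graph of (vertex, available list-color) pairs to extract the needed recolorings. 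This is the step most likely to fail in full generality, which is precisely why the statement remains a conjecture. As an intermediate target, I would first attempt the bound for graphs with additional structure (such as bounded degeneracy, planarity, or small $\Delta$), where the Hall condition is tractable and the recoloring digraph has a controllable structure, and then try to extract from those special cases a uniform discharging argument that generalizes.
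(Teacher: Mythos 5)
There is a genuine gap here, and it is the one you yourself flag: the statement you are asked to prove is Conjecture~\ref{conj: KPW1}, which is an \emph{open conjecture} from \cite{KP03}. The paper contains no proof of it and only records partial progress (forests, connected interval graphs, 2-degenerate graphs with $\Delta\geq 5$, outerplanar and series-parallel graphs, certain planar graphs, and all graphs with $\Delta(G)\leq 7$ by Kierstead and Kostochka \cite{KK13}). So no blind attempt could be ``the same as the paper's proof''; the honest assessment is that your sketch, like everything in the literature, stops short of a proof, and you correctly identify where.

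To make the obstruction concrete: the missing step is the existence of the augmenting trail from an overfull class to a vertex that can legally absorb the excess. In the non-list setting every vertex has the same $k$ colors available, so a vertex of degree at most $\Delta\leq k-1$ always has at least one \emph{other} color class it could join, and the Hajn\'al--Szemer\'edi machinery only has to show that some such move (or chain of moves) avoids creating a new overfull class. In the list setting the palette $\mathcal{L}$ can be far larger than $k$, the classes are indexed by colors whose multiplicities $\eta(c)$ vary wildly, and a vertex $u$ in an overfull class may find that every color in $L(u)\setminus\{f(u)\}$ is either blocked by a neighbor or already used $\lceil |V(G)|/k\rceil$ times; your Hall-type condition on the (vertex, available color) bipartite graph can simply fail, and there is no known discharging or BFS-leveling argument that restores it. It is also worth noting that the successful partial results do not use global recoloring at all: they peel off a carefully chosen set $S$ of $k$ vertices with controlled degrees into the remainder (exactly the mechanism of Lemma~\ref{cor: extendstrong} in this paper), equitably color $G-S$ by induction, and extend via a system of distinct representatives. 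If you want to make progress, that inductive peeling strategy is the route with a track record, not an augmenting-path transformation of an arbitrary initial $L$-coloring.
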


\begin{conj}[\cite{KP03}] \label{conj: KPW2}
A connected graph $G$ is equitably $k$-choosable for each $k \geq \Delta(G)$ if it is different from $K_m$, $C_{2m+1}$, and $K_{2m+1,2m+1}$.
\end{conj}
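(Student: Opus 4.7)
The plan is to mimic the strategy used for the (still open) $\Delta$-Equitable Coloring Conjecture and upgrade it to the list setting. Fix a $k$-assignment $L$ for a connected graph $G$ with $k \geq \Delta(G)$, $G$ not in $\{K_m, C_{2m+1}, K_{2m+1,2m+1}\}$; set $n = |V(G)|$ and let the quota be $q = \lceil n/k \rceil$. I would proceed by induction on $n$, trying to locate a small independent set $S$ (of size at most $k$) whose removal yields a smaller connected graph $G-S$ that is still not one of the excluded graphs and still has maximum degree at most $k$. Apply induction to $G-S$ with the restricted list assignment to obtain an equitable $L$-coloring of $G-S$, then try to extend to $S$.

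The extension would be handled by matching theory. Build a bipartite graph $H$ with $S$ on one side and all colors appearing in lists of vertices of $S$ on the other; join $s \in S$ to $c \in L(s)$ iff no neighbor of $s$ in $V(G)-S$ already uses $c$ and the current global count of $c$ is strictly less than $q$. An extension exists iff $H$ has a matching saturating $S$, so the task reduces to verifying Hall's condition. Since $|L(s)| = k \geq \Delta(G)$ and $s$ has at most $\Delta(G)-1$ neighbors outside $S$, at least one list color at $s$ is not blocked by a neighbor; the only subtle point is that colors already used $q$ times become unavailable, and I would argue by a global vertex count that too many saturated colors would force the already-colored part of $G$ to contain more than $n-|S|$ vertices.

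When no such $S$ can be removed, $G$ is essentially $\Delta$-regular, and the classical proof turns to Brooks-type structure to find a good small piece to recolor. In the list setting I would start from any proper $L$-coloring of $G$ (which exists by a Brooks-type list-coloring theorem, as $k \geq \Delta(G)$ and $G$ is non-exceptional), then rebalance via alternating-path swaps: while some color $c$ is used more than $q$ times, find a $c$-vertex whose list contains an under-used color $c'$ and perform a Kempe-style swap along a path in an auxiliary bipartite graph that decreases the $c$-class by one. The main obstacle will be precisely that such alternating paths need not exist when the lists are rigid: a vertex in an overfull class may have no recipient color in its list, and the graphs $K_m$, $C_{2m+1}$, and $K_{2m+1,2m+1}$ reappear as minimal obstructions to the Hall-type condition that would make the rebalancing always possible. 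I expect a complete proof is out of reach at present, and that this program can only be completed for classes where the $\Delta$-ECC itself has been verified (trees, outerplanar graphs, subcubic graphs, interval graphs), by upgrading each of those arguments to a matching framework on auxiliary bipartite graphs that encode the list constraints.
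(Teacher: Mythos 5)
This statement is not a theorem of the paper at all: it is Conjecture~\ref{conj: KPW2}, an open conjecture of Kostochka, Pelsmajer, and West restated from~\cite{KP03}. The paper offers no proof and only reports that it has been verified for special classes (forests, interval graphs, outerplanar and series-parallel graphs, certain planar graphs, and graphs of maximum degree at most~7). So there is nothing in the paper to compare your argument against, and your proposal --- as you yourself concede in its final sentence --- does not constitute a proof.

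Beyond that framing issue, the two load-bearing steps of your program are exactly the ones that are left unestablished. First, the inductive step presumes you can always find an independent set $S$ with $|S|\leq k$ whose removal leaves a connected, non-exceptional graph; guaranteeing such an $S$ (and controlling what happens when every choice of $S$ disconnects $G$ or produces an excluded graph) is where the published proofs for special classes do all of their work, and no general mechanism is given here. Second, the Hall-condition verification is asserted via ``a global vertex count,'' but the count does not go through as stated: a color $c$ can already be used $q=\lceil n/k\rceil$ times and hence be blocked at \emph{every} vertex of $S$ simultaneously, and distinct vertices of $S$ may have nearly identical lists, so a set $T\subseteq S$ can easily have $|N_H(T)|<|T|$ without forcing more than $n-|S|$ colored vertices. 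The same obstruction defeats the Kempe-style rebalancing in the regular case, as you note. In short, the proposal is a reasonable description of why the conjecture is hard, not a proof of it; the honest conclusion is that the statement remains open.
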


In~\cite{KP03} it is shown that Conjectures~\ref{conj: KPW1} and~\ref{conj: KPW2} hold for forests, connected interval graphs, and 2-degenerate graphs with maximum degree at least 5.  Conjectures~\ref{conj: KPW1} and~\ref{conj: KPW2} have also been verified for outerplanar graphs~\cite{ZB10}, series-parallel graphs~\cite{ZW11}, and certain planar graphs (see~\cite{LB09}, \cite{ZB08}, and~\cite{ZB15}).  In 2013, Kierstead and Kostochka made substantial progress on Conjecture~\ref{conj: KPW1}, and proved it for all graphs of maximum degree at most 7 (see~\cite{KK13}).

\subsection{Proportional Choosability}

If a graph is $k$-choosable, then it is $k$-colorable.  However, it can happen that a graph is equitably $k$-choosable, but not equitably $k$-colorable.  For example, $K_{1,6}$ is equitably 3-choosable, but it is not equitably 3-colorable.  This shows that the notion of equitable choosability is not actually a strengthening of equitable coloring.  Generally speaking, the reason a graph can be equitably $k$-choosable without being equitably $k$-colorable is that in an equitable $k$-coloring we are forced to not under-use any colors, but for an equitable list coloring there is no lower bound on how many times a color must be used.

In some sense it is reasonable to not place any lower bound on the number of times a color should be used in the list context, since colors in a list assignment might only appear on a few lists, which of course  limits the number of times that the color will appear in a list coloring.  The number of lists in which a color appears could be less than $\lfloor |V(G)|/k \rfloor$ or even as low as one.  Equitable choosability generalizes equitable coloring from the side where it is possible to do so, and it abandons trying to control it from the other side.

In this paper we present a way to fix this difficulty.  In particular, we introduce the notion of proportional choosability which will generalize equitable coloring from both sides. We will have that if a graph is proportionally $k$-choosable, it must be equitably $k$-colorable.  The creation of this notion was facilitated by a question asked by Stasi~\cite{S17} during a talk given by the second author~\cite{M17}.

Suppose that $L$ is a $k$-assignment for graph $G$.  The \emph{palette of colors associated with $L$} is $\cup_{v \in V(G)} L(v)$.  For each color $c$, the \emph{multiplicity of $c$ in $L$} is the number of lists of $L$ in which $c$ appears.  The multiplicity of $c$ in $L$ is denoted by
$\eta_L(c)$ (or simply $\eta(c)$ when the list assignment is clear), so $\eta_L(c)=\left\lvert{\{v : v \in V(G), c \in L(v) \}}\right\rvert$.  Throughout this paper, when $L$ is a list assignment for some graph $G$, we always use $\mathcal{L}$ to denote the palette of colors associated with $L$.

Given a $k$-assignment $L$ for a graph $G$, a proper $L$-coloring, $f$, for $G$ is a \emph{proportional $L$-coloring} of $G$ if for each $c \in \mathcal{L}$, $f^{-1}(c)$, the color class of $c$, is of size
$$ \left \lfloor \frac{\eta(c)}{k} \right \rfloor \; \; \text{or} \; \; \left \lceil \frac{\eta(c)}{k} \right \rceil.$$
We say that graph $G$ is \emph{proportionally $L$-colorable} if a proportional $L$-coloring of $G$ exists, and $G$ is \emph{proportionally $k$-choosable} if $G$ is proportionally $L$-colorable whenever $L$ is a $k$-assignment for $G$.  The next proposition shows that this notion has the property that we want.

\begin{pro} \label{pro: motivation}
If $G$ is proportionally $k$-choosable, then $G$ is both equitably $k$-choosable and equitably $k$-colorable.
\end{pro}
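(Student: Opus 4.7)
The plan is to verify the two conclusions separately, each by a short direct argument; there is no substantive obstacle here, because the definition of proportional $L$-coloring is strong enough to force both properties essentially for free.

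For the equitably $k$-choosable part, I would fix an arbitrary $k$-assignment $L$ for $G$ and let $f$ be a proportional $L$-coloring, which exists by hypothesis. To see that $f$ is equitable, the key observation is just that $\eta(c) \leq |V(G)|$ for every $c \in \mathcal{L}$, so
\[
|f^{-1}(c)| \;\leq\; \left\lceil \frac{\eta(c)}{k} \right\rceil \;\leq\; \left\lceil \frac{|V(G)|}{k} \right\rceil,
\]
which is exactly the equitable condition. Since $L$ was arbitrary, $G$ is equitably $k$-choosable.

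For the equitably $k$-colorable part, I would specialize to the constant $k$-assignment $L_0$ defined by $L_0(v) = \{1,2,\ldots,k\}$ for every $v \in V(G)$. Then for each color $c \in \{1,\ldots,k\}$ we have $\eta_{L_0}(c) = |V(G)|$, so any proportional $L_0$-coloring is a proper $k$-coloring in which every color class has size $\lfloor |V(G)|/k \rfloor$ or $\lceil |V(G)|/k \rceil$, which is precisely an equitable $k$-coloring. Applying proportional $k$-choosability to $L_0$ yields such a coloring, completing the proof.

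The only step that might feel delicate is handling the potentially empty color classes in the definition of equitable $k$-coloring (since a $k$-coloring is allowed to have empty color classes), but this causes no trouble: with $L_0$ as above, proportionality forces every class to have size within $\{\lfloor |V(G)|/k\rfloor, \lceil |V(G)|/k\rceil\}$, which automatically satisfies the pairwise difference-at-most-one condition.
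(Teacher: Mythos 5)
Your proposal is correct and follows essentially the same approach as the paper: the bound $\eta(c)\leq|V(G)|$ gives equitable choosability immediately, and specializing to the constant assignment $\{1,\ldots,k\}$ gives equitable colorability. Your version simply spells out the inequalities a bit more explicitly than the paper does.
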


\begin{proof}
Suppose $G$ is proportionally $k$-choosable.  When $L$ is a $k$-assignment for $G$, it is clear that a proportional $L$-coloring for $G$ is an equitable $L$-coloring for $G$ since $\eta(c) \leq |V(G)|$ for each $c \in \mathcal{L}$.  Thus, $G$ is equitably $k$-choosable.

To see that $G$ is equitably $k$-colorable, consider the list assignment $K$ that assigns the list $\{1,2, \ldots, k \}$ to each $v \in V(G)$.  There must be a proportional $K$-coloring, $f$, for $G$.  Clearly, $f$ is an equitable $k$-coloring of $G$.
\end{proof}

For disconnected graphs, equitable colorings on components can be merged after appropriately permuting color classes within each component~\cite{YZ97} to obtain an equitable coloring of the whole graph.  Equivalently, if $G_1,\ldots,G_t$ are pairwise vertex disjoint graphs,
each of which has an equitable $k$-coloring, then their disjoint union $\sum_{i=1}^t G_i$ also has an equitable $k$-coloring.
This property does not generalize to equitable choosability; we know that $K_{1,6}$ and $K_2$ are equitably 3-choosable but $K_{1,6} + K_2$ is not equitably 3-choosable. We will see below that it also does not hold in the context of proportional choosability. Thus, techniques that we use to find equitable $L$-colorings or proportional $L$-colorings on connected graphs may not be successful when it comes to disconnected graphs.

Before ending this subsection, there are a couple of historical remarks that are worth making.  Suppose that $G$ is a graph and $L$ is a list assignment for $G$.  Moreover, suppose that $p$ is a mapping that associates a positive integer $p(c)$ to each color $c \in \mathcal{L}$.  The decision problem of whether there is a proper $L$-coloring of $G$ such that each $c \in \mathcal{L}$ is used exactly $p(c)$ times, denoted $(G, L, p)$, has been studied before.  Determining whether a proportional $L$-coloring of $G$ exists is a slightly less restrictive instance of this problem.

In 1997, de Werra~\cite{W97} showed that when $G$ is a disjoint union of cliques, $(G, L, p)$ is in P.  Then, in 1999, Dror et al.~\cite{DF99} showed that $(P_n, L, p)$ is NP-complete even if $|L(v)| \leq 2$ for every vertex in the path.  However, if the palette of colors associated with $L$ must be of size at most $k$, then problem $(P_n, L, p)$ can be solved in time $O(n^k)$ by dynamic programming (see~\cite{DF99}).  Also, in 2002~\cite{GK02} it was shown that when $G$ is a planar bipartite graph and the palette of colors associated with $L$ is of size at most 3, $(G, L, p)$ is NP-complete.

\subsection{Results and Questions}

We now present an outline of the paper while summarizing our results and mentioning some open questions. In Section~\ref{easyresults} we present some initial results related to proportional choosability which will be utilized in subsequent sections.  Most importantly, we use matching theory to show that proportional choosability is {\it monotonic in $k$}, meaning that if $G$ is proportionally $k$-choosable, then it must be proportionally $(k+1)$-choosable as well.  We also show that proportional $k$-choosability is {\it monotone}, meaning that if $H$ is a subgraph of $G$ and $G$ is proportionally $k$-choosable, then $H$ is also proportionally $k$-choosable.

Those results are surprising, considering that equitable coloring and equitable list coloring do not behave so nicely.  Trivially, $k$-colorability and $k$-choosability are monotone and imply $(k+1)$-colorability and $(k+1)$-choosability. However, if a graph is equitably $k$-colorable (resp.~choosable), it need not be equitably $(k+1)$-colorable (resp.~choosable).  Indeed, $K_{3,3}$ is equitably 2-colorable and is not equitably 3-colorable, and $K_{1,9}$ is equitably 4-choosable and is not equitably 5-choosable.  Moreover, the graph property of being equitably $k$-colorable (resp.~choosable) is not monotone.  Indeed, $K_{3,3}$ is equitably 2-colorable, but $K_{1,3}$ is not equitably 2-colorable, and $K_{1,6}$ is equitably 3-choosable, but $K_{1,5}$ is not equitably 3-choosable.

The fact that we have monotonicity in $k$ when it comes to proportional choosability leads us to introduce a graph invariant.  In particular, for any graph $G$, the \emph{proportional choice number} of $G$, denoted $\chi_{pc}(G)$, is the smallest $k$ such that $G$ is proportionally $k$-choosable.  By monotonicity in $k$, we know that any graph $G$ is proportionally $k$-choosable if and only if $k \geq \chi_{pc}(G)$.

In Section~\ref{smallorder} we study the proportional choosability of graphs of small order. We give an algorithmic argument to convert an equitable $L$-coloring with some additional restrictions into a proportional $L$-coloring for a $k$-assignment $L$ of $G$ with every color having multiplicity less than $2k$ (Lemma~\ref{lem: algorithm}), which helps us prove the following result for proportional choosability in the spirit of an earlier result~(\cite[Theorem~1.1]{KP03}).

\begin{thm} \label{thm: smallorder}
Any graph $G$ with $\Delta(G) \geq 1$ satisfies
$$\chi_{pc}(G) \leq \Delta(G) + \left \lceil \frac{|V(G)|}{2} \right \rceil.$$
\end{thm}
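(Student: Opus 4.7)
Let $n = |V(G)|$ and let $L$ be a $k$-assignment for $G$ with $k \geq \Delta(G) + \lceil n/2 \rceil$. The plan is to reduce to Lemma~\ref{lem: algorithm}, which takes an equitable $L$-coloring (satisfying some additional restrictions) and produces a proportional $L$-coloring, provided every color has multiplicity less than $2k$. So I would proceed in three steps: verify the multiplicity bound, construct a suitable equitable $L$-coloring, and apply the lemma.

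First, because $\eta(c) \leq n$ for every $c \in \mathcal{L}$, and $\Delta(G) \geq 1$ gives
$$2k \;\geq\; 2\Delta(G) + 2\left\lceil \frac{n}{2} \right\rceil \;\geq\; 2 + n \;>\; n,$$
we have $\eta(c) < 2k$ for every color in the palette, so Lemma~\ref{lem: algorithm} is applicable. Note also that $k \geq \lceil n/2 \rceil \geq n/2$, so $\lceil n/k \rceil \leq 2$, meaning an equitable $L$-coloring uses each color at most twice.

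Second, I would construct an equitable $L$-coloring by a greedy argument, preceded by a short preprocessing phase to meet whatever ``additional restrictions'' Lemma~\ref{lem: algorithm} demands (the likely candidate being that every color with $\eta(c) \geq k$ is used at least once; any such color lies in some $L(v)$, and since there are relatively few such colors we can pre-assign them on an independent set of vertices). After preprocessing, order the remaining vertices $v_1, v_2, \ldots$ arbitrarily; when coloring $v_i$, forbid the at most $\Delta(G)$ colors appearing on previously colored neighbors and the at most $\lfloor (i-1)/2 \rfloor$ colors already used twice. Because
$$k - \Delta(G) - \left\lfloor \frac{n-1}{2} \right\rfloor \;\geq\; \left\lceil \frac{n}{2} \right\rceil - \left\lfloor \frac{n-1}{2} \right\rfloor \;\geq\; 1,$$
a legal color in $L(v_i)$ always remains (a case check on the parity of $n$ handles the last inequality). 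This produces the required equitable $L$-coloring, and Lemma~\ref{lem: algorithm} then converts it into a proportional $L$-coloring, establishing $\chi_{pc}(G) \leq k$.

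The main obstacle I expect is the second step: calibrating the greedy procedure so that it simultaneously achieves equitability and the additional hypotheses of Lemma~\ref{lem: algorithm}. The preprocessing step must use up only a small fraction of the slack; the surplus $\lceil n/2 \rceil$ built into the hypothesis should be more than enough, but a careful accounting is needed so that forbidden colors never exhaust a list during greedy completion. Everything else is routine arithmetic once the exact form of Lemma~\ref{lem: algorithm} is in hand.
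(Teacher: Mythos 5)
Your overall architecture matches the paper's: check that every color has multiplicity less than $2k$, produce a proper $L$-coloring satisfying an extra condition, and feed it to Lemma~\ref{lem: algorithm}. Your first step is correct, and you rightly flag the second step as the obstacle. But that is exactly where the proposal breaks down, for two related reasons. First, you have misidentified the extra condition. The ``in particular'' clause of Lemma~\ref{lem: algorithm} requires a proper $L$-coloring that uses \emph{no color excessively}, i.e.\ each color $c$ is used at most $\lceil \eta(c)/k\rceil$ times; the lemma's own job is to repair deficiencies, so the hypothesis is not about using high-multiplicity colors at least once. Crucially, for a color with $1\leq\eta(c)\leq k$ we have $\lceil \eta(c)/k\rceil=1$, so such a color may be used at most \emph{once}. ``Each color at most twice'' (equitability, from $\lceil n/k\rceil\leq 2$) is strictly weaker than what the lemma needs.

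Second, because of this, your greedy count is wrong. At step $i$ you must forbid not only the $\lfloor (i-1)/2\rfloor$ colors already used twice but also every already-used color of multiplicity at most $k$, of which there can be up to $i-1$; the inequality $k-\Delta(G)-(i-1)\geq 1$ for all $i\leq n$ would demand $k\geq \Delta(G)+n$, far above $\Delta(G)+\lceil n/2\rceil$. A concrete obstruction: with $\Delta(G)=1$, $n=10$, $k=6$, and ten colors each of multiplicity exactly $6$, every color must be used at most once; a valid coloring exists (it is a perfect matching in a $6$-regular bipartite vertex--color incidence graph), but a greedy order can reach a vertex all of whose list colors are already used. This is why the paper devotes Lemma~\ref{lem: halfsec} to this step: it takes a minimal counterexample and repairs a stuck partial coloring by shifting colors along directed paths in an auxiliary digraph (an augmenting-path argument), splitting into cases according to whether some color is used once but could absorb another use. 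No preprocessing-plus-greedy scheme with the slack $\lceil n/2\rceil$ appears to substitute for that argument, so the proposal as written does not establish the theorem.
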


By Corollary~\ref{cor: fullcliques} below, if $\Delta(G)=0$, then $\chi_{pc}(G)=1$.

It is natural to ask whether the proportional analogues of Conjectures~\ref{conj: KPW1} and~\ref{conj: KPW2} hold (i.e., can we replace ``equitably $k$-choosable" with ``proportionally $k$-choosable" in each of these conjectures).  We will show in Section~\ref{easyresults} that the proportional analogue of Conjecture~\ref{conj: KPW2} does not hold.  However, the proportional analogue of Conjecture~\ref{conj: KPW1} is open.

\begin{ques} \label{ques: fullHS}
For any graph $G$, is $G$ proportionally $k$-choosable whenever $k \geq \Delta(G)+1$?
\end{ques}

Another question that can be asked about the proportional choosability of graphs of small order is whether an analogue of Ohba's Conjecture holds (see~\cite{O02} and~\cite{NR15}). In Section~\ref{easyresults} we show that $\chi_{pc}(K_{2*m}) > m$ whenever $m \geq 2$, where
$K_{2*m}$ is the complete $m$-partite graph where each partite set is of size 2.  This leads to the following question.

\begin{ques} \label{ques: proOhba}
If $G$ is equitably $k$-colorable and $|V(G)| \leq 2k-1$, must it be that $G$ is proportionally $k$-choosable?
\end{ques}

Finally, in Section~\ref{matching} we use more matching theory to prove some further results on proportional choosability.  Specifically, we begin by proving a result for disconnected graphs.

\begin{thm} \label{thm: fullcomponents}
If $G$ is a graph such that its largest component has $k$ vertices, then $\chi_{pc}(G) \leq k$.
\end{thm}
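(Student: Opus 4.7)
The plan is to translate the problem into a feasibility question for a network flow with integer lower and upper capacity bounds, and then verify feasibility by exhibiting the uniform fractional flow that puts mass $1/k$ on every admissible edge. Let $L$ be a $k$-assignment for $G$, let $G_1,\ldots,G_t$ be the components of $G$, write $n_i = |V(G_i)|\le k$, and let $\mathcal{L}$ and $\eta(c)$ denote the palette and multiplicities. The key observation is that because each list has size $k$ and each component has at most $k$ vertices, Hall's condition for using distinct colors inside any single component is trivially satisfied; what is delicate is to make the global color-class sizes land in the intervals $[\lfloor \eta(c)/k\rfloor, \lceil \eta(c)/k\rceil]$ while we do so.

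I would build a directed network $N$ with source $s$, sink $t$, and internal nodes $V(G)\cup\{(G_i,c):1\le i\le t,\ c\in\mathcal{L}\}\cup\{c^*:c\in\mathcal{L}\}$. Add the edges $s\to v$ of capacity $1$ for each vertex $v$; $v\to (G_i,c)$ of capacity $1$ whenever $v\in V(G_i)$ and $c\in L(v)$; $(G_i,c)\to c^*$ of capacity $1$; and $c^*\to t$ with lower bound $\lfloor\eta(c)/k\rfloor$ and upper bound $\lceil\eta(c)/k\rceil$. An integer feasible flow of value $|V(G)|$ saturates each $s\to v$, hence sends exactly one unit of flow through a unique path $s\to v\to (G_i,c)\to c^*\to t$; reading off $f(v)=c$ defines an $L$-coloring. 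The capacity-$1$ constraint on $(G_i,c)\to c^*$ prevents two vertices of the same component from sharing a color, so $f$ is proper, while the bounds on $c^*\to t$ force each color-class size $|f^{-1}(c)|$ into $\{\lfloor\eta(c)/k\rfloor,\lceil\eta(c)/k\rceil\}$, so $f$ is proportional.

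To verify feasibility I would push $1/k$ unit of flow along every $v\to (G_i,c)$ edge with $c\in L(v)$, together with the forced flow on the other arcs. The edge $s\to v$ carries $|L(v)|/k=1$; the edge $(G_i,c)\to c^*$ carries $|\{v\in V(G_i):c\in L(v)\}|/k\le n_i/k\le 1$; and the edge $c^*\to t$ carries $\eta(c)/k$, which lies in $[\lfloor\eta(c)/k\rfloor,\lceil\eta(c)/k\rceil]$. This is a feasible fractional flow of value $|V(G)|$. Since every capacity and bound in $N$ is an integer, integrality of the feasible-flow polytope (equivalently, total unimodularity of the transportation/bipartite $b$-matching constraint matrix with an extra target node) yields an integer feasible flow of value $|V(G)|$, and we are done.

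The main conceptual obstacle is packaging the local properness requirement and the global proportional-count requirement into one matching-theoretic object so that both can be verified simultaneously. The device of splitting each color $c$ into component-indexed copies $(G_i,c)$ with unit capacity into $c^*$ is what allows the local distinctness constraint inside each component (which is what turns a perfect transversal into a proper coloring and which works precisely because $n_i\le k$) to interact with a single global capacity constraint on color $c$; once this is set up, the inequality $n_i\le k$ is exactly what makes the uniform $1/k$ fractional flow feasible, and integrality finishes the job.
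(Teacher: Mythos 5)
Your proof is correct, but it takes a genuinely different route from the paper. The paper also works with matching theory, but in several layers: it first proves a ``huing'' lemma (splitting each color $c$ into hues of multiplicity $k$, padding with dummy vertices to absorb the remainders $r_c$, and extracting a perfect matching from a $k$-regular bipartite multigraph), then proves the theorem only under the extra hypothesis that each $G[V_c]$ has enough isolated vertices to pad its nontrivial components to size $k$ (Lemma~\ref{lem: isolated}), and finally removes that hypothesis by a minimal-counterexample argument on a potential function $\sigma(G,L)$, adding $k$ new isolated vertices with specially chosen lists to strictly decrease the potential. Your single flow network with integer lower and upper bounds collapses all of this: the unit-capacity arcs $(G_i,c)\to c^*$ play the role of the paper's hues (at most one vertex per component per color, which suffices for properness since adjacent vertices lie in a common component of order at most $k$), the bounds $[\lfloor\eta(c)/k\rfloor,\lceil\eta(c)/k\rceil]$ on $c^*\to t$ replace the well-distributed expansion, and the uniform $1/k$ fractional flow together with integrality of the circulation polytope replaces the regular-multigraph matching decomposition --- with no need for the potential-function induction, because the network never requires padding multiplicities to exact multiples of $k$. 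What your approach buys is a noticeably shorter, self-contained proof of this theorem; what the paper's approach buys is a toolkit (huings, well-distributed expansions, Lemma~\ref{lem:hues}) that it reuses for monotonicity in $k$ and for the star characterization. One small point to make fully rigorous: to get an integer flow of value exactly $|V(G)|$, either add a return arc $t\to s$ with lower and upper bound $|V(G)|$ and invoke Hoffman's circulation theorem, or note that the flows of value $|V(G)|$ form a face of the integral feasible-flow polytope; your sketch gestures at this and it is routine.
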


The following corollary is related to the problem studied by de Werra~\cite{W97} mentioned at the end of the previous subsection.

\begin{cor} \label{cor: fullcliques}
If $G$ is a disjoint union of cliques and the largest component of $G$ has $t$ vertices, then $\chi_{pc}(G)=t$.
\end{cor}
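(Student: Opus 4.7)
The statement has two inequalities to verify, $\chi_{pc}(G) \leq t$ and $\chi_{pc}(G) \geq t$, and both follow almost immediately from results already stated in the paper.

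For the upper bound $\chi_{pc}(G) \leq t$, note that every component of $G$ has at most $t$ vertices, so in particular the largest component has exactly $t$ vertices. This is precisely the hypothesis of Theorem~\ref{thm: fullcomponents}, which gives $\chi_{pc}(G) \leq t$ directly. No additional work is required.

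For the lower bound $\chi_{pc}(G) \geq t$, the plan is to exhibit a $(t-1)$-assignment for which no proportional $L$-coloring exists. Since $G$ is a disjoint union of cliques whose largest component has $t$ vertices, $G$ contains $K_t$ as a component, hence as a (induced) subgraph. Now $K_t$ itself cannot be proportionally $(t-1)$-choosable: letting $L(v)=\{1,2,\ldots,t-1\}$ for every vertex $v$ of $K_t$ yields a $(t-1)$-assignment admitting no proper $L$-coloring at all (since a proper coloring of $K_t$ requires $t$ distinct colors on its vertices), and a proportional $L$-coloring is in particular a proper $L$-coloring. By the monotonicity of proportional $k$-choosability under taking subgraphs, proved in Section~\ref{easyresults}, it follows that $G$ is not proportionally $(t-1)$-choosable, so $\chi_{pc}(G) \geq t$.

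Combining the two bounds yields $\chi_{pc}(G)=t$. I expect no real obstacle here; the corollary is essentially a packaging of Theorem~\ref{thm: fullcomponents} together with the elementary non-$(t-1)$-colorability of $K_t$ and the subgraph-monotonicity of proportional choosability, all of which are available earlier in the paper.
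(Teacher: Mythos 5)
Your proposal is correct and matches the paper's own proof: the upper bound is Theorem~\ref{thm: fullcomponents} applied directly, and the lower bound combines the fact that $K_t$ is not $(t-1)$-colorable with the subgraph monotonicity of Proposition~\ref{lem: monotone}. No issues.
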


We also completely characterize the proportional choosability of stars.

\begin{thm} \label{thm: fullstars}
$\chi_{pc}(K_{1,m}) = 1 + \lceil m/2 \rceil$.
\end{thm}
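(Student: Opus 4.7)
The plan is to prove the two bounds separately.

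For the lower bound $\chi_{pc}(K_{1,m}) \geq 1 + \lceil m/2 \rceil$, let $k = \lceil m/2 \rceil$ and give every vertex of $K_{1,m}$ the same list $\{1, 2, \ldots, k\}$. Then $\eta(c) = m + 1$ for every color, and a short parity check (both for $m$ even and $m$ odd) shows $\lfloor (m+1)/k \rfloor \geq 2$. Consequently, each color must appear at least twice in any proportional $L$-coloring. But the color assigned to the center $u$ is forbidden on every leaf (each leaf is adjacent to $u$), so it is used exactly once---a contradiction.

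For the upper bound, set $k = 1 + \lceil m/2 \rceil$ and let $L$ be any $k$-assignment for $K_{1,m}$. The key observation is that $|V(K_{1,m})| = m + 1 \leq 2k - 1$, which forces $\eta(c) \leq 2k - 1$, and hence $\lceil \eta(c)/k \rceil \leq 2$ and $\lfloor \eta(c)/k \rfloor \leq 1$, for every color. Thus in any proportional $L$-coloring each color $c$ is used $0$ or $1$ times if $\eta(c) < k$, exactly once if $\eta(c) = k$, and $1$ or $2$ times if $\eta(c) > k$. My strategy is to choose the color $c^* \in L(u)$ of the center and then color the leaves via a bipartite matching argument; the constraint on $c^*$ itself is automatically satisfied, since $1 \in [\lfloor \eta(c^*)/k \rfloor, \lceil \eta(c^*)/k \rceil]$.

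After fixing $c^*$, the leaf-coloring becomes a bipartite transportation problem: for each color $c \ne c^*$, create $\lceil \eta(c)/k \rceil$ slots with lower demand $\lfloor \eta(c)/k \rfloor$, and seek a matching saturating the leaves, where $v_i$ is adjacent to the slots of every color in $L(v_i) \setminus \{c^*\}$. An incidence count shows that the Hall-type upper-capacity condition $\sum_{c \in T(S)} \lceil \eta(c)/k \rceil \geq |S|$, with $T(S) = \bigcup_{v \in S} L(v) \setminus \{c^*\}$, can fail for a subset $S$ of leaves only when at least $k$ leaves of $S$ carry $c^*$ in their lists; indeed, $\sum_{c \in T(S)} \eta(c) \geq k|S| - |\{v \in S : c^* \in L(v)\}|$, and dividing by $k$ makes $\sum u(c) > |S| - 1$ whenever this deficit is less than $k$. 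Choosing $c^* \in L(u)$ with $\eta(c^*) \leq k$ therefore guarantees Hall's condition for every $S$; the lower demands $\lfloor \eta(c)/k \rfloor$ can then be enforced by rerouting along augmenting paths in a leaf-saturating matching.

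The main obstacle is the case in which every color in $L(u)$ has $\eta \geq k + 1$. The bound $\sum_{c \in L(u)} \eta(c) \leq k(m+1) \leq k(2k-1)$ then tightly restricts the lists of leaves sharing colors with $L(u)$, and a careful structural analysis---examining, for each potential Hall-violator $S$, the common intersection of the leaf lists in $S$---should let me pick a $c^* \in L(u)$ that lies outside $\bigcup_{v \in S} L(v)$ and simultaneously avoids all violations. This is the only delicate step; the narrow usage windows forced by $n \leq 2k - 1$ make most choices of $c^*$ work immediately via the Hall bound above.
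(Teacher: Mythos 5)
Your lower bound is correct and is essentially the paper's: a constant $\lceil m/2\rceil$-assignment forces every color class to have size at least $\lfloor (m+1)/\lceil m/2\rceil\rfloor=2$, while the center's color can be used only once (the paper packages this as $K_{1,2k-1}$ not being equitably $k$-colorable, then applies Proposition~\ref{pro: motivation} and monotonicity). Your upper-bound argument is also sound in the case where some $c^*\in L(u)$ has $\eta(c^*)\leq k$: your incidence count correctly bounds the Hall deficit by $\eta(c^*)-1\leq k-1<k$, so a leaf assignment with no excessive color exists, and the ``rerouting along augmenting paths'' you invoke to repair deficient colors is exactly the paper's Lemma~\ref{lem: algorithm} (which requires, and here has, $\max_{c}\eta(c)<2k$). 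This half parallels the paper's case~(2), which reaches the same conclusion via the huing/perfect-matching machinery of Lemma~\ref{lem:hues}.

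The genuine gap is the case you yourself flag: every color in $L(u)$ has multiplicity at least $k+1$. Your proposed remedy --- choose $c^*\in L(u)$ lying outside $\bigcup_{v\in S}L(v)$ for each potential Hall violator $S$ --- cannot work as stated, because in this case every candidate $c^*\in L(u)$ appears in at least $k$ leaf lists, so the set of leaves carrying $c^*$ is itself a potential violator that no choice of $c^*$ avoids; ``a careful structural analysis should let me'' is precisely where the real work lies, and it is not supplied. The paper handles this case by abandoning the fix-$c^*$-then-match framework: writing $L(u)=\{c_1,\ldots,c_k\}$, it colors $u$ with $c_1$, places $c_2$ on two leaves and each of $c_3,\ldots,c_k$ on one further leaf (possible since each $c_i$ appears on at least $k$ leaves), then greedily colors the remaining $m-k\leq k-2$ leaves with pairwise distinct colors from lists of size at least $k-2$ after deleting $c_1,c_2$. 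The resulting proper coloring uses no color excessively, and Lemma~\ref{lem: algorithm} upgrades it to a proportional one. You would need to supply an argument of this kind, or prove feasibility of your transportation problem in this regime, to close the case.
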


So $K_{1,m}$ is proportionally $k$-choosable if and only if $k \geq 1 + m/2$.  In terms of maximum degree, this is the same as the sharp bound known for equitable $k$-choosability of forests (see~\cite{KP03}).  On the other hand, we will see in Section~\ref{easyresults} that the proportional analogue of Conjecture~\ref{conj: KPW2} does not even hold for trees.

\section{Preliminary Results} \label{easyresults}

\subsection{Basic Results}

We begin by going over some terminology and simple results.  For the remainder of this paper, if $L$ is a list assignment for a graph $G$, and $G'$ is a subgraph of $G$, then we write $L' = L |_{V(G')}$ when $L'$ is the list assignment for $G'$ obtained by restricting $L$ to $V(G')$.


\begin{lem} \label{cor: extendstrong}
Let $G$ be a graph and $L$ a $k$-assignment for $G$. Let $S = \{x_1, \ldots, x_k \}$ so that $x_1, \ldots, x_k$ are distinct vertices in $G$.  Suppose that $L(x_i)$ is the same list for each $x_i \in S$.  If $G-S$ has a proportional $L'$-coloring where $L' = L|_{V(G)-S}$ and
$$|N_G(x_i) - S| \leq k-i$$
for $1 \leq i \leq k$, then $G$ has a proportional $L$-coloring.
\end{lem}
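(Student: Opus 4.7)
The plan is to extend the given proportional $L'$-coloring $f'$ of $G-S$ to a proportional $L$-coloring $f$ of $G$ by coloring $S$ so that each color of the common list $T := L(x_1) = \cdots = L(x_k)$ appears exactly once on $S$. Concretely, we seek a bijection $\sigma : S \to T$ satisfying $\sigma(x_i) \notin f'(N_G(x_i) - S)$ for every $i$, and then set $f(x_i) = \sigma(x_i)$ and $f(v) = f'(v)$ for $v \in V(G) - S$. Since $\sigma$ is injective, edges inside $S$ are automatically handled; the side condition on $\sigma$ is precisely what is needed so that $f$ is a proper $L$-coloring.

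To see that such an $f$ is proportional, split on whether a color $c$ lies in $T$. For $c \in T$, every list $L(x_i)$ contains $c$, so $\eta_L(c) = \eta_{L'}(c) + k$, which gives $\lfloor \eta_L(c)/k \rfloor = \lfloor \eta_{L'}(c)/k \rfloor + 1$ and $\lceil \eta_L(c)/k \rceil = \lceil \eta_{L'}(c)/k \rceil + 1$; because each color of $T$ is used exactly once on $S$, we have $|f^{-1}(c)| = |f'^{-1}(c)| + 1$ (interpreting $|f'^{-1}(c)| = 0$ if $c \notin \mathcal{L}'$), and this lies in the required interval. For $c \in \mathcal{L} \setminus T$, no vertex of $S$ contributes to either $\eta_L(c)$ or $f^{-1}(c)$, so both $\eta_L(c) = \eta_{L'}(c)$ and $|f^{-1}(c)| = |f'^{-1}(c)|$, and proportionality transfers directly from $f'$.

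The remaining task, and really the only substantive step, is producing the bijection $\sigma$. The plan is a greedy assignment that processes $x_1, x_2, \ldots, x_k$ in this order. At step $i$, a color $c \in T$ is unusable for $x_i$ only if either $c$ has already been assigned to some $x_j$ with $j < i$ (at most $i-1$ colors) or $c \in f'(N_G(x_i) - S)$ (at most $|N_G(x_i) - S| \leq k - i$ colors). The total number of forbidden colors is therefore at most $(i-1) + (k-i) = k - 1 < k = |T|$, so at least one color of $T$ remains available for $\sigma(x_i)$. The hypothesis $|N_G(x_i) - S| \leq k - i$ is tuned exactly so this greedy argument runs to completion, and no deeper obstacle arises.
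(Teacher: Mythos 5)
Your proof is correct and follows essentially the same route as the paper: the paper derives this lemma from a more general extension lemma (its Lemma 7, with each $m_a=1$), which amounts to exactly your two steps — greedily building a system of distinct representatives $\sigma:S\to T$ avoiding the colors on outside neighbors, and checking that adding one use of each color of $T$ while raising each such color's multiplicity by $k$ preserves proportionality. No gaps.
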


Lemma~\ref{cor: extendstrong} follows immediately from this next, more general result which has a straightforward proof that can be found in~\cite{M18}.

\begin{lem} \label{lem: extendstrong}
Let $G$ be a graph, $L$ a $k$-assignment for $G$, and $S \subseteq V(G)$.  Let $\mathcal{K} = \bigcup_{v \in  S} L(v)$.  Suppose that for each $a \in \mathcal{K}$ there is an $m_a \in \N$ so that
$$ |\{v \in S : a \in L(v)\}| = m_ak. $$
Suppose $f_1$ is a proportional $L'$-coloring of $G-S$ where $L'=L|_{V(G)-S}$.  Let $L''(v) = L(v) - \{f_1(u) : u \in (N_G(v) - S) \}$
for each $v \in S$.  If there is a proper $L''$-coloring, $f_2$, of $G[S]$ that uses each $a \in \mathcal{K}$ exactly $m_a$ times, then $f_1$ together with $f_2$ form a proportional $L$-coloring of $G$.
\end{lem}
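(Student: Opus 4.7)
The plan is to define the combined coloring $f : V(G) \to \mathcal{L}$ by $f(v) = f_1(v)$ for $v \in V(G) - S$ and $f(v) = f_2(v)$ for $v \in S$, and then verify two things in turn: (i) $f$ is a proper $L$-coloring of $G$, and (ii) for every $c \in \mathcal{L}$, the class $f^{-1}(c)$ has size $\lfloor \eta_L(c)/k \rfloor$ or $\lceil \eta_L(c)/k \rceil$.

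For (i), properness restricted to $V(G) - S$ and restricted to $S$ is inherited directly from $f_1$ and $f_2$. For an edge between $v \in S$ and $u \in N_G(v) - S$, the definition $L''(v) = L(v) - \{f_1(w) : w \in N_G(v) - S\}$ forces $f_2(v) \neq f_1(u)$, so no conflict arises across the cut. Since $L''(v) \subseteq L(v)$ and $L'(v) = L(v)$ for $v \in V(G) - S$, every vertex receives a color from its own $L$-list.

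For (ii), fix $c \in \mathcal{L}$ and write $\eta_L(c) = \eta_{L'}(c) + \eta_S(c)$ where $\eta_S(c) := |\{v \in S : c \in L(v)\}|$, and observe that $f_2$ uses only colors from $\mathcal{K}$. If $c \notin \mathcal{K}$, then $\eta_S(c) = 0$, no vertex of $S$ receives $c$, so $|f^{-1}(c)| = |f_1^{-1}(c)|$ and $\eta_L(c) = \eta_{L'}(c)$; the desired bound is then exactly the proportional condition on $f_1$. If $c \in \mathcal{K}$, then by hypothesis $\eta_S(c) = m_c k$ and $f_2$ uses $c$ exactly $m_c$ times, so $|f^{-1}(c)| = |f_1^{-1}(c)| + m_c$; combining this with the integer-shift identities $\lfloor (\eta_{L'}(c) + m_c k)/k \rfloor = \lfloor \eta_{L'}(c)/k \rfloor + m_c$ and $\lceil (\eta_{L'}(c) + m_c k)/k \rceil = \lceil \eta_{L'}(c)/k \rceil + m_c$, the proportional bound on $|f_1^{-1}(c)|$ translates directly into the desired bound on $|f^{-1}(c)|$.

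The only subtlety worth flagging is the edge case in which $c \in \mathcal{K}$ but $c \notin \mathcal{L}'$, so that the proportional hypothesis on $f_1$ does not literally mention $c$. There $|f_1^{-1}(c)| = 0 = \lfloor \eta_{L'}(c)/k \rfloor = \lceil \eta_{L'}(c)/k \rceil$, so the same computation still goes through. Apart from this bookkeeping point, the argument is purely arithmetic, which is why the authors defer to~\cite{M18}.
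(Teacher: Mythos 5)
Your proof is correct and complete: properness across the cut is exactly what the definition of $L''$ guarantees, and the color-class counts follow from the arithmetic identity $\lfloor (\eta_{L'}(c)+m_ck)/k\rfloor = \lfloor \eta_{L'}(c)/k\rfloor + m_c$ (and its ceiling analogue), with the $c\notin\mathcal{K}$ and $c\notin\mathcal{L}'$ cases handled properly. The paper itself omits the argument, deferring to the ``straightforward proof'' in~\cite{M18}, and your verification is precisely that straightforward proof.
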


Suppose that $L$ is a $k$-assignment for the graph $G$, and suppose that $f$ is a proper $L$-coloring of $G$.  We call $a \in \mathcal{L}$ a \emph{well distributed color in $L$} (or simply a \emph{well distributed color} when the list assignment is clear) if $\eta(a)$ is divisible by $k$.  A color $p \in \mathcal{L}$ is called \emph{perfectly used with respect to $f$} if $p$ is well distributed and $|f^{-1}(p)|= \eta(p)/k$.  A color $b \in \mathcal{L}$ is called \emph{almost excessive with respect to $f$} if $b$ is not well distributed and $|f^{-1}(b)| = \lceil \eta(b)/k \rceil$, and a color $d \in \mathcal{L}$ is called \emph{almost deficient with respect to $f$} if $d$ is not well distributed and $|f^{-1}(d)| = \lfloor \eta(d)/k \rfloor$.  A color $a' \in \mathcal{L}$ is called \emph{excessive with respect to $f$} if $|f^{-1}(a')| > \lceil \eta(a')/k \rceil$, and a color $d' \in \mathcal{L}$ is called \emph{deficient with respect to $f$} if  $|f^{-1}(d')| < \lfloor \eta(d')/k \rfloor$.

Throughout this paper whenever $L$ is a $k$-assignment for $G$ and $c\in \mathcal{L}$, write $\eta(c) = kq_c + r_c$ where $0 \leq r_c \leq k-1$. We can easily count almost excessive colors by adding up all the remainders, as follows.

\begin{lem} \label{lem: countexcessive}
Suppose that $L$ is a $k$-assignment for the graph $G$, and suppose that $f$ is a proportional $L$-coloring of $G$.
Then the number of almost excessive colors with respect to $f$ is
$$ \frac{1}{k} \sum_{l \in \mathcal{L}} r_l.$$
\end{lem}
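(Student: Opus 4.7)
The plan is to use a straightforward double-counting argument on the sum $\sum_{c \in \mathcal{L}} |f^{-1}(c)|$, evaluating it in two different ways and comparing.

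First I would observe that since $f$ is a coloring of $V(G)$, summing the color-class sizes gives $\sum_{c \in \mathcal{L}} |f^{-1}(c)| = |V(G)|$. On the other hand, since each vertex $v$ contributes $|L(v)| = k$ to the sum $\sum_{c \in \mathcal{L}} \eta(c)$, we get $\sum_{c \in \mathcal{L}} \eta(c) = k|V(G)|$, so $|V(G)| = \sum_{c \in \mathcal{L}} q_c + \frac{1}{k}\sum_{c \in \mathcal{L}} r_c$ using the division $\eta(c) = kq_c + r_c$.

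Next I would evaluate $\sum_{c \in \mathcal{L}} |f^{-1}(c)|$ using the proportionality of $f$. Partition $\mathcal{L}$ into perfectly used colors (those with $r_c = 0$), almost excessive colors, and almost deficient colors; since $f$ is a proportional $L$-coloring, every $c \in \mathcal{L}$ falls into exactly one of these classes. For a perfectly used color $|f^{-1}(c)| = q_c$, for an almost deficient color $|f^{-1}(c)| = \lfloor \eta(c)/k \rfloor = q_c$, and for an almost excessive color $|f^{-1}(c)| = \lceil \eta(c)/k \rceil = q_c + 1$ (using $r_c \geq 1$ in the last two cases). Hence $\sum_{c \in \mathcal{L}} |f^{-1}(c)| = \sum_{c \in \mathcal{L}} q_c + N$, where $N$ is the number of almost excessive colors with respect to $f$.

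Equating the two expressions and canceling $\sum_{c \in \mathcal{L}} q_c$ would then yield $N = \frac{1}{k}\sum_{c \in \mathcal{L}} r_c$, as desired. There is no real obstacle here; the only point requiring care is checking that the three categories (perfectly used, almost excessive, almost deficient) exhaust $\mathcal{L}$ under the hypothesis that $f$ is proportional, which follows directly from the definitions given just before the lemma.
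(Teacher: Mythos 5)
Your proposal is correct and is essentially the same double-counting argument as the paper's: both compute $\sum_{c\in\mathcal{L}}|f^{-1}(c)|=|V(G)|$ once via the partition of $\mathcal{L}$ into perfectly used, almost excessive, and almost deficient colors, and once via $\sum_{c\in\mathcal{L}}\eta(c)=k|V(G)|$. The paper writes the class sizes as $\eta(c)/k$, $(\eta(c)+k-r_c)/k$, and $(\eta(c)-r_c)/k$ rather than $q_c$ and $q_c+1$, but this is only a cosmetic difference.
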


\begin{proof}
Assume $|V(G)|=n$.  Let $A$ be the set of well distributed colors in $\mathcal{L}$.  Let $B$ be the set of almost excessive colors and let $D$ be the set of almost deficient colors with respect to $f$.  Since $f$ is a proportional $L$-coloring, every color in $\mathcal{L}$ is either perfectly used, almost excessive, or almost deficient with respect to $f$.  We calculate that:
\begin{align*}
n = \sum_{l \in \mathcal{L}} |f^{-1}(l)| &= \sum_{a \in A} |f^{-1}(a)| + \sum_{b \in B} |f^{-1}(b)| + \sum_{d \in D} |f^{-1}(d)| \\
&= \sum_{a \in A} \frac{\eta(a)}{k} + \sum_{b \in B} \frac{\eta(b) + k - r_b}{k} + \sum_{d \in D} \frac{\eta(d) - r_d}{k} \\
&= |B| + \frac{1}{k} \left ( \sum_{a \in A} (\eta(a) - r_a) + \sum_{b \in B} (\eta(b) - r_b) + \sum_{d \in D} (\eta(d) - r_d)  \right ) \\
&= |B| + \frac{1}{k} \left ( \sum_{l \in \mathcal{L}} \eta(l) - \sum_{l \in \mathcal{L}} r_l  \right ) \\
&= |B| + \frac{1}{k} \left ( nk - \sum_{l \in \mathcal{L}} r_l  \right ) \\
&= |B| + n - \frac{1}{k} \sum_{l \in \mathcal{L}} r_l.
\end{align*}
From this calculation it is easy to deduce that $|B| = \frac{1}{k} \sum_{l \in \mathcal{L}} r_l.$
\end{proof}

\subsection{Matchings and Monotonicity}  We now show that proportional choosability is monotonic in $k$.  The proof relies on some matching theory.  So, we begin with a quick review of the necessary matching theory, and we introduce some notation.  This notation will also be used when we obtain some further results via matching theory in Section~\ref{matching}.

Using matching theory to prove results about list coloring is not new (see~\cite{ET79}).  A \emph{matching} in a graph $G$ is a set of edges with no shared endpoints.  If $M$ is a matching in $G$, and $X \subseteq V(G)$ such that each vertex in $X$ is an endpoint of an edge in $M$, we say that $X$ is \emph{saturated} by $M$.  If $M$ is a matching in $G$ that saturates $V(G)$, then we say that $M$ is a \emph{perfect matching} of $G$ and \emph{$G$ has a perfect matching}.  The following classic result was proven by Hall in 1935.

\begin{thm}[\cite{H35}, {\bf Hall's Theorem}] \label{thm: Halls}
Suppose $B$ is a bipartite multigraph with bipartition $X, Y$.  Then, $B$ has a matching that saturates $X$ if and only if $|N_B(S)| \geq |S|$ for all $S \subseteq X$.
\end{thm}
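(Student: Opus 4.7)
The plan is to prove the two directions separately, with the \emph{only if} direction being essentially a one-line observation and the \emph{if} direction being the substantive content. For the easy direction, if $M$ is a matching that saturates $X$ and $S \subseteq X$, then the endpoints in $Y$ of the edges of $M$ incident to $S$ are $|S|$ distinct vertices lying in $N_B(S)$, so $|N_B(S)| \geq |S|$.

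For the hard direction, I would proceed by strong induction on $|X|$. The base case $|X| = 1$ is immediate since Hall's condition forces $N_B(X)$ to be nonempty, giving an edge to match. For the inductive step, I would split into two cases according to whether Hall's condition holds strictly or with equality on some proper nonempty subset. In the \emph{strict} case, where $|N_B(S)| \geq |S| + 1$ for every nonempty proper $S \subset X$, pick any $x \in X$, any neighbor $y \in N_B(x)$, and delete both vertices. The strictness budget of $1$ absorbs the loss of $y$, so Hall's condition still holds in the smaller graph on $X \setminus \{x\}$; apply the induction hypothesis and add the edge $xy$.

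In the \emph{tight} case, where some nonempty proper $S \subset X$ satisfies $|N_B(S)| = |S|$, I would apply the induction hypothesis to the bipartite subgraph $B_1$ induced by $S \cup N_B(S)$ to obtain a matching $M_1$ saturating $S$. Then I would consider the bipartite multigraph $B_2$ on parts $X \setminus S$ and $Y \setminus N_B(S)$ inherited from $B$, and verify that it satisfies Hall's condition: for any $T \subseteq X \setminus S$, the neighborhood of $S \cup T$ in $B$ contains $N_{B_2}(T) \cup N_B(S)$ as a disjoint union, so $|N_{B_2}(T)| \geq |N_B(S \cup T)| - |N_B(S)| \geq |S \cup T| - |S| = |T|$. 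Apply induction to $B_2$ to obtain a matching $M_2$ saturating $X \setminus S$, and output $M_1 \cup M_2$.

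The main obstacle is recognizing that the right case split is strict-versus-tight Hall condition on proper subsets, and setting up the inductive reduction in the tight case so that the two subproblems truly are independent — this is exactly where the disjointness $N_{B_2}(T) \cap N_B(S) = \emptyset$ is used and where one has to be a bit careful with multigraph edges. Everything else is bookkeeping, and the multigraph hypothesis plays no role beyond allowing multiple edges between a given pair, which neither inductive reduction disturbs.
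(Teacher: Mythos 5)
The paper does not prove this statement at all: Hall's Theorem is quoted as a classical result with a citation to Hall's 1935 paper, so there is no in-paper argument to compare against. Your proof is the standard and correct one --- the easy direction via matched partners, and the hard direction by strong induction on $|X|$ with the strict-versus-tight case split (the Halmos--Vaughan argument). All the key verifications are present and sound: the strictness budget absorbs the deleted vertex $y$ in the strict case, Hall's condition passes to $B_1$ because $N_B(T)\subseteq N_B(S)$ for $T\subseteq S$, the displayed inequality correctly transfers Hall's condition to $B_2$, and the multigraph hypothesis is harmless since neighborhoods ignore edge multiplicity. No gaps.
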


It is easy to prove the following corollary from Hall's Theorem (see~\cite{W01}).

\begin{cor} \label{cor: matchdecomp}
If $B$ is a $k$-regular bipartite multigraph, then $E(B)$ can be partitioned into $k$ perfect matchings in $B$.
\end{cor}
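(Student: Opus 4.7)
The plan is to prove the corollary by induction on $k$, with the inductive step reducing to the existence of a single perfect matching in a $k$-regular bipartite multigraph, which in turn follows from Hall's Theorem.

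For the base case $k = 1$, a $1$-regular bipartite multigraph is itself a perfect matching, so the trivial partition works. For the inductive step, assume the result holds for $k-1$ and let $B$ be a $k$-regular bipartite multigraph with bipartition $X, Y$. The key claim I need is that $B$ has a perfect matching $M$. Once I have $M$, removing its edges yields a $(k-1)$-regular bipartite multigraph, which by the inductive hypothesis decomposes into $k-1$ perfect matchings; together with $M$, these give the desired partition of $E(B)$.

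To establish the key claim, I would first note that counting edges incident to $X$ and to $Y$ in two ways gives $k|X| = |E(B)| = k|Y|$, so $|X| = |Y|$. Then I would verify Hall's condition on $X$: for any $S \subseteq X$, every edge incident to $S$ has its other endpoint in $N_B(S)$, so the number of edges incident to $S$ (counted with multiplicity) is at most the number of edges incident to $N_B(S)$, giving $k|S| \leq k|N_B(S)|$ and hence $|S| \leq |N_B(S)|$. By Theorem~\ref{thm: Halls}, $B$ has a matching saturating $X$, and since $|X| = |Y|$ this matching is perfect.

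The main subtlety to be careful about is that $B$ is a multigraph rather than a simple graph, but Hall's Theorem as stated in Theorem~\ref{thm: Halls} is explicitly given for bipartite multigraphs, so this causes no issue; one only needs to remember that edges between the same pair of vertices each contribute to the degree count and to the edge count from $S$ to $N_B(S)$. Overall this is a short, standard argument and I do not expect any real obstacle.
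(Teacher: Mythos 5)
Your proof is correct and is precisely the standard argument the paper alludes to when it says the corollary "is easy to prove from Hall's Theorem": verify Hall's condition via the double count $k|S|\leq k|N_B(S)|$, extract a perfect matching, delete it to get a $(k-1)$-regular bipartite multigraph, and induct. The paper gives no proof of its own (it cites West), so there is nothing further to compare.
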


We will introduce a specially constructed auxiliary $k$-regular bipartite multigraph, $B$, where one partite set corresponds to colors in $\mathcal{L}$ and the other corresponds to vertices in $G$.  We then use a perfect matching in $B$ to obtain a proportional $L$-labelling, and under additional constraints we can ensure that this labelling is actually a proper coloring.

We now go through the specifics.  For the following lemmas, it is convenient to relax some previous notions.  A \emph{$k$-multi-assignment} assigns to each vertex a multiset of $k$ colors (allowing repeats).  In this context, the multiplicity of a color is the sum of its multiplicities across all lists, and we may still ask for a proportional coloring from such an assignment.

\begin{lem}\label{lem:allmultk}
Let $L$ be a $k$-multi-assignment for $G$ such that $\eta(c)=k$ for every $c\in \mathcal{L}$.  Then there is a proportional $L$-coloring of $G$.  Furthermore, for every $v_0\in V(G)$ and every $c_0\in L(v_0)$, there exists a proportional $L$-coloring $f$ such that $f(v_0)=c_0$.
\end{lem}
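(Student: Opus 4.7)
The plan is to set up an auxiliary bipartite multigraph and invoke the decomposition result in Corollary~\ref{cor: matchdecomp}. Let $B$ be the bipartite multigraph with partite sets $V(G)$ and $\mathcal{L}$, where for each $v \in V(G)$ and $c \in \mathcal{L}$ the number of parallel edges between $v$ and $c$ equals the multiplicity of $c$ in the multiset $L(v)$. Every vertex $v \in V(G)$ has degree $|L(v)|=k$ (since $L$ is a $k$-multi-assignment), and every color $c \in \mathcal{L}$ has degree $\eta(c)=k$ by hypothesis, so $B$ is $k$-regular. In particular, $|V(G)| = |\mathcal{L}|$, and since $\eta(c)=k$ for every color, a proportional $L$-coloring is required to use each color exactly $\lfloor k/k \rfloor = \lceil k/k \rceil = 1$ time.

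First I would observe that any bijection $f \colon V(G) \to \mathcal{L}$ with $f(v) \in L(v)$ for all $v$ is automatically a proportional $L$-coloring: it is proper because distinct vertices receive distinct colors, and each color is used exactly once, which matches the required count. Such a bijection corresponds precisely to a perfect matching of $B$. By Corollary~\ref{cor: matchdecomp}, the edge set $E(B)$ can be partitioned into $k$ perfect matchings $M_1,\ldots,M_k$, so $B$ has (at least one) perfect matching and the first statement follows immediately.

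For the ``furthermore'' part, fix $v_0 \in V(G)$ and $c_0 \in L(v_0)$. Since $c_0 \in L(v_0)$, there is at least one edge $e$ between $v_0$ and $c_0$ in $B$. The partition $E(B) = M_1 \sqcup \cdots \sqcup M_k$ places $e$ in exactly one of the matchings, say $M_j$, and this $M_j$ then pairs $v_0$ with $c_0$. The bijection $f$ defined by $M_j$ satisfies $f(v_0)=c_0$ and, by the previous paragraph, is a proportional $L$-coloring.

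There is no real obstacle here, since the arithmetic $\eta(c)=k$ collapses the proportional condition into ``use each color exactly once,'' which is tailor-made for a perfect matching argument; the only small care needed is to treat $L(v)$ as a multiset so that $B$ is a multigraph and Corollary~\ref{cor: matchdecomp} applies cleanly to guarantee both the existence of a perfect matching and, via the decomposition, one that uses the prescribed edge $v_0 c_0$.
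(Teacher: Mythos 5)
Your proof is correct and takes essentially the same approach as the paper: construct the $k$-regular auxiliary bipartite multigraph on $V(G)$ and $\mathcal{L}$, apply Corollary~\ref{cor: matchdecomp} to extract a perfect matching through the edge $v_0c_0$, and observe that the resulting bijection uses each color exactly once and is therefore a proper, proportional $L$-coloring. Your write-up is somewhat more explicit than the paper's about why the bijection is automatically proper and why the proportional condition reduces to ``each color used once,'' but the argument is the same.
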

\begin{proof}
Define an auxiliary bipartite multigraph with partite sets $V(G)$ and $\mathcal{L}$, and create edges so that there are $\alpha$ edges with endpoints $v$ and $c$ if and only if $c$ appears in $L(v)$ $\alpha$ times.  By hypothesis, this graph is $k$-regular.  By Corollary~\ref{cor: matchdecomp}, there is a perfect matching containing an edge with endpoints $v_0$ and $c_0$.  A perfect matching in this bigraph defines an $L$-coloring of $G$; since each color is used exactly once, it is a proportional $L$-coloring.
\end{proof}

Given a graph $G$ with a $k$-assignment $L$, we define the \emph{well-distributed expansion} of $(G,L)$ as a pair $(G',L')$ constructed as follows.  Let $G'$ be $G$ plus a new isolated vertex $v_c$ for each color $c$ that is not well distributed in $L$.  For each such $v_c$, let $L'(v_c)$ be a multiset consisting of $k-r_{c}$ copies of color $c$ and $r_c$ copies of a new color $c^* \notin \mathcal{L}$.  For $v\in V(G)$, let $L'(v)=L(v)$.  Note that $\eta(c^*)=\sum_{v_c}r_c$, which is clearly divisible by $k$.  Thus, $L'$ is a $k$-multi-assignment of $V(G')$ for which every color is well distributed.

Given a (multi-)list assignment $L$, we call a function $f: V(G)\to \mathcal{L}$ with $f(v)\in L(v)$ for every $v\in V(G)$ (that need not be a proper coloring) an \emph{$L$-labelling}. An $L$-labelling is \emph{proportional} if for every $c\in\mathcal{L}$, $\lfloor\eta(c)/k\rfloor\leq|f^{-1}(c)|\leq\lceil\eta(c)/k\rceil$.

\begin{lem}\label{lem:wellexp}
If $(G',L')$ is a well-distributed expansion of $(G,L)$ and $f$ is a proportional $L'$-labelling of $G'$, then $f$ restricted to $V(G)$ is a proportional $L$-labelling of $G$.
\end{lem}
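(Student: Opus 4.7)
The statement is essentially a bookkeeping exercise: I want to verify that restricting $f$ to $V(G)$ still satisfies the proportionality inequalities, given what the expansion does to the multiplicities. The plan is to handle two cases depending on whether a color $c\in\mathcal{L}$ is well distributed in $L$, track how $\eta(c)$ changes when passing from $L$ to $L'$, and then account for the single new vertex $v_c$ (if any) that might also be labelled $c$.

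First I would observe that for every $v\in V(G)$ we have $L'(v)=L(v)\subseteq\mathcal{L}$, so $g:=f|_{V(G)}$ maps each vertex into its $L$-list and hence is an $L$-labelling of $G$. In particular $g$ never takes a value of the form $c^*$, so every color we need to count is a color of $\mathcal{L}$.

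Next I would compute $\eta_{L'}(c)$ for each $c\in\mathcal{L}$. If $c$ is well distributed in $L$, no new vertex $v_{c'}$ has $c$ in its list (each $v_{c'}$ only lists $c'$ and $c'^*$), so $\eta_{L'}(c)=\eta_L(c)=kq_c$. Proportionality of $f$ then forces $|f^{-1}(c)|=q_c$, and since no new vertex is labelled $c$, $|g^{-1}(c)|=|f^{-1}(c)|=q_c=\eta_L(c)/k$, which is simultaneously the floor and ceiling. If instead $c$ is not well distributed, $\eta_L(c)=kq_c+r_c$ with $1\le r_c\le k-1$, and the construction adds exactly $k-r_c$ occurrences of $c$ (in the list of $v_c$), giving $\eta_{L'}(c)=kq_c+r_c+(k-r_c)=k(q_c+1)$. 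Proportionality of $f$ then gives $|f^{-1}(c)|=q_c+1$. The only vertex outside $V(G)$ whose list contains $c$ is $v_c$, and $v_c$ contributes either $0$ or $1$ to this count depending on whether $f(v_c)=c^*$ or $f(v_c)=c$. Thus $|g^{-1}(c)|\in\{q_c,q_c+1\}=\{\lfloor\eta_L(c)/k\rfloor,\lceil\eta_L(c)/k\rceil\}$.

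Putting the two cases together shows that for every $c\in\mathcal{L}$ the count $|g^{-1}(c)|$ lies between $\lfloor\eta_L(c)/k\rfloor$ and $\lceil\eta_L(c)/k\rceil$, so $g$ is a proportional $L$-labelling of $G$. There is no real obstacle here; the only point where one has to be careful is remembering that the new vertex $v_c$ contributes to $\eta_{L'}(c)$ but may or may not contribute to $|f^{-1}(c)|$, which is exactly what allows the one-unit slack between floor and ceiling.
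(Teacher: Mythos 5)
Your proof is correct and follows essentially the same argument as the paper: split on whether $c$ is well distributed in $L$, note that a well-distributed color appears in no new list so it is used perfectly, and for a non-well-distributed color observe that $\eta_{L'}(c)=k(q_c+1)$ forces $|f^{-1}(c)|=q_c+1$, of which the vertex $v_c$ absorbs either $0$ or $1$, leaving $|g^{-1}(c)|\in\{q_c,q_c+1\}$. The extra bookkeeping you include (verifying $\eta_{L'}(c)$ explicitly in each case) is just a more detailed writeup of the paper's proof, not a different route.
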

\begin{proof}
Let $f'$ be a proportional $L'$-labelling of $G'$, and consider $f=f'|_{V(G)}$.  We have that $f$ is an $L$-labelling of $G$.  Each color that is well distributed in $L$ does not appear in any new lists of $L'$, so that color is used perfectly by $f$. Each color $c$ that is not well distributed in $L$ is used $q_c+1$ times by $f'$; if it is used on $v_c$, then it is used almost deficiently by $f$, and if it is not used on $v_c$, then it is used almost excessively by $f$.
\end{proof}

Given a $k$-assignment $L$ for $G$, a \emph{huing} of $L$ is a new $k$-assignment obtained by replacing each $c\in\mathcal{L}$ by $\lceil\eta(c)/k\rceil$ new colors called the \emph{hues of $c$}, with each hue occurring in exactly $k$ lists except perhaps the last (if $r_c>0$) which is called the \emph{scarce hue of $c$}, which appears in $r_c$ lists.
We say that a huing is \emph{good} if $G$ has no edges joining vertices whose lists contain different hues of the same color.


\begin{lem}\label{lem:hues}
Let $L$ be a $k$-assignment for $G$, let $\widetilde{L}$ be any huing of $(G,L)$, and pick any $v_0\in V(G)$ and $c_0\in L(v_0)$.
Then there is a proportional $L$-labelling of $G$, $f$, with $f(v_0)=c_0$, and if $\widetilde{L}$ is a good huing then $f$ is a proportional
$L$-coloring of $G$.
\end{lem}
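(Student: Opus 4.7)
The plan is to lift the problem from the color level up to the hue level, solve it in the huing via Lemma \ref{lem:allmultk}, and then project back. First I would pick the hue $\widetilde{c}_0$ of $c_0$ that occupies the $c_0$-slot of $\widetilde{L}(v_0)$, which exists because a huing replaces $c_0$ in $L(v_0)$ by exactly one of its hues. Then I would form the well-distributed expansion $(G', \widetilde{L}')$ of $(G, \widetilde{L})$: for each scarce hue $h$ adjoin an isolated padding vertex $v_h$ whose multiset list holds $k-r_h$ copies of $h$ plus $r_h$ auxiliary ``padding'' slots. After this operation every hue has multiplicity exactly $k$ in $\widetilde{L}'$, since non-scarce hues were already at multiplicity $k$ in $\widetilde{L}$ and scarce hues are padded from $r_h$ to $k$.

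Next, I would invoke Lemma \ref{lem:allmultk} applied to $(G',\widetilde{L}')$ with the pin $v_0 \mapsto \widetilde{c}_0$, producing a proportional $\widetilde{L}'$-coloring $f'$ of $G'$ with $f'(v_0)=\widetilde{c}_0$. By Lemma \ref{lem:wellexp}, the restriction $f'|_{V(G)}$ is a proportional $\widetilde{L}$-labelling of $G$. Collapsing every hue back to its underlying color yields $f:V(G)\to\mathcal{L}$ with $f(v_0)=c_0$. Proportionality of $f$ follows from a short count: each hue has multiplicity exactly $k$ in $\widetilde{L}'$, so $f'$ uses each hue on exactly one vertex of $G'$. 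Hence, for each $c\in \mathcal{L}$, every non-scarce hue of $c$ contributes $1$ to $|f^{-1}(c)|$ (the hue appears in no padding list), while the scarce hue of $c$ (if $r_c>0$) contributes either $0$ or $1$ depending on whether $f'$ placed it on the padding vertex or on a vertex of $G$. The total is therefore $q_c$ or $q_c+1$, which is exactly $\lfloor \eta(c)/k\rfloor$ or $\lceil \eta(c)/k\rceil$.

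For the good-huing case, suppose for contradiction that $\widetilde{L}$ is good and some edge $uv\in E(G)$ has $f(u)=f(v)=c$. Then $f'(u)$ and $f'(v)$ are both hues of $c$. Since every hue has multiplicity exactly $k$ in $\widetilde{L}'$, the proportional $\widetilde{L}'$-coloring $f'$ uses each hue on a single vertex, forcing $f'(u)\neq f'(v)$. But then the lists $\widetilde{L}(u)$ and $\widetilde{L}(v)$ contain different hues of the same color $c$, contradicting the definition of a good huing. I expect the main obstacle to be bookkeeping in the expansion step: Lemma \ref{lem:allmultk} as stated asks for every color to have multiplicity exactly $k$, whereas the padding colors created by the well-distributed expansion have multiplicities that are only guaranteed to be multiples of $k$. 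One must group the scarcity residues $r_h$ (whose total is divisible by $k$) into blocks of size exactly $k$ and assign a separate new padding color to each block, so that Lemma \ref{lem:allmultk} applies verbatim; once this is arranged, the argument above goes through.
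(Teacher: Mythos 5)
Your proposal is correct and follows essentially the same route as the paper: pass to a huing, pad to a $k$-multi-assignment in which every color has multiplicity exactly $k$, apply Lemma~\ref{lem:allmultk} with the pinned vertex, restrict and project hues back, and rule out monochromatic edges via the goodness of the huing. The only (cosmetic) difference is that you hue first and then take the well-distributed expansion, whereas the paper expands $(G,L)$ first and then extends the huing to the expansion; your closing remark about grouping the padding residues into multiplicity-$k$ blocks correctly resolves the one bookkeeping issue this reordering creates.
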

\begin{proof}
Let $(G',L')$ be a well-distributed expansion of $(G,L)$.
The huing $\widetilde{L}$ extends naturally to a huing $\widetilde{L'}$ of $L'$: the colors in lists of $G$ are hued according to $\widetilde{L}$, the appearances of color $c$ in $L'(v_c)$ are changed to the scarce hue of $c$, and the additional color $c^*$ added in the expansion is hued arbitrarily.

Lemma~\ref{lem:allmultk} gives us a proportional $\widetilde{L'}$-coloring of $G'$ such that the color on $v_0$ is a hue of $c_0$.  Restricting it to $V(G)$ yields a proportional $\widetilde{L}$-labelling $\widetilde{f}$ of $G$ by Lemma~\ref{lem:wellexp}.
Projecting hues back to their original colors preserves the proportional use of each color, so that results in a proportional $L$-labelling $f$.

If $uv\in E(G)$ and $f(u)=f(v)=c$, then $\widetilde{f}(u)\neq\widetilde{f}(v)$ since $\widetilde{f}$ was inherited from a $\widetilde{L'}$-coloring of $G'$, so $\widetilde{f}(u)$ and $\widetilde{f}(v)$ must be two different hues of $c$.  That cannot happen if $\widetilde{L}$ is a good huing.
\end{proof}

Finally, we are ready to prove monotonicity in $k$.
\begin{pro}\label{pro: monoink}
If $G$ is proportionally $k$-choosable, then $G$ is proportionally $(k+1)$-choosable.
\end{pro}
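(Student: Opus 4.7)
The plan is to take a $(k+1)$-assignment $L$ for $G$ and build from it a $k$-assignment $L^*$, by removing one color from each list, in such a way that every proportional $L^*$-coloring is automatically a proportional $L$-coloring of $G$. Since $G$ is proportionally $k$-choosable, an $L^*$-coloring of this kind exists, finishing the proof.

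For each $c \in \mathcal{L}$ write $\eta_L(c) = (k+1)q_c + r_c$ with $0 \leq r_c \leq k$; the quantity $T := \sum_c r_c/(k+1)$ is a nonnegative integer since $\sum_c \eta_L(c) = (k+1)|V(G)|$. I want to select nonnegative integers $d_c$ with $d_c = q_c$ when $r_c = 0$, $d_c \in \{q_c, q_c+1\}$ when $r_c > 0$, and $\sum_c d_c = |V(G)|$, together with a map $\rho \colon V(G) \to \mathcal{L}$ satisfying $\rho(v) \in L(v)$ and $|\rho^{-1}(c)| = d_c$ for each $c$. Then $L^*(v) := L(v) \setminus \{\rho(v)\}$ is a $k$-assignment with $\eta_{L^*}(c) = \eta_L(c) - d_c$, and a brief case analysis across $r_c = 0$, $(r_c > 0,\, d_c = q_c)$, and $(r_c > 0,\, d_c = q_c+1)$ shows that $\{\lfloor \eta_{L^*}(c)/k \rfloor, \lceil \eta_{L^*}(c)/k \rceil\}$ is always contained in the permitted proportional-$L$ multiplicities (namely $\{q_c\}$ when $r_c = 0$ and $\{q_c, q_c+1\}$ when $r_c > 0$), so any proportional $L^*$-coloring is a proportional $L$-coloring.

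The main obstacle is producing the pair $(d_c, \rho)$, and this is where matching theory enters. Let $B$ be the bipartite graph with parts $V(G)$ and $\mathcal{L}$ and an edge $v \sim c$ whenever $c \in L(v)$; finding $\rho$ amounts to finding a perfect matching in the bipartite multigraph $B'$ obtained from $B$ by replacing each color $c$ with $d_c$ parallel copies. The fractional assignment $x_{vc} = 1/(k+1)$ for every $(v,c)$ with $c \in L(v)$ has row sums $1$ and column sums $\eta_L(c)/(k+1) = q_c + r_c/(k+1)$, and double-counting the edges of $B$ with an endpoint in $S \subseteq V(G)$ gives $\sum_{c \in N_B(S)} \eta_L(c) \geq (k+1)|S|$, so the required Hall-type inequality $\sum_{c \in N_B(S)} d_c \geq |S|$ holds fractionally for every $S$. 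Because the row- and column-sum constraints define a transportation polytope with a totally unimodular constraint matrix and integer right-hand side, the fractional solution rounds to an integer one, which simultaneously encodes the integer vector $(d_c)$ and the matching realizing $\rho$; equivalently, one applies Hall's theorem (Theorem~\ref{thm: Halls}) to $B'$.
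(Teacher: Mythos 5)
Your proposal is correct, and its skeleton is exactly the paper's: peel off from each list one color according to a map $\rho$ that uses each color $c$ either $\lfloor\eta_L(c)/(k+1)\rfloor$ or $\lceil\eta_L(c)/(k+1)\rceil$ times, apply proportional $k$-choosability to the resulting $k$-assignment, and check by the same remainder case analysis that the coloring obtained is proportional for $L$. Your $\rho$ is precisely what the paper calls a \emph{proportional $L$-labelling}, and the only genuine divergence is how you produce it. The paper builds it through its huing/well-distributed-expansion machinery (Lemma~\ref{lem:hues}), which pads every color class out to multiplicity divisible by $k+1$ and then extracts a perfect matching from a $(k+1)$-regular bipartite multigraph via Corollary~\ref{cor: matchdecomp}; you instead exhibit the uniform fractional point $x_{vc}=1/(k+1)$ in the transportation polytope with row sums $1$ and column sums pinned between $\lfloor\eta_L(c)/(k+1)\rfloor$ and $\lceil\eta_L(c)/(k+1)\rceil$, and round it using total unimodularity. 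Your route is more self-contained for this one proposition and avoids introducing hues, whereas the paper's lemma does more work (it also controls properness via \emph{good} huings and fixes the label of a chosen vertex, both of which are reused in Lemma~\ref{lem: isolated} and Theorem~\ref{thm: fullstars}). One small caveat: your closing remark that this is ``equivalently'' an application of Hall's theorem to $B'$ is loose, since $B'$ presupposes the integers $d_c$ and a matching saturating $V(G)$ only enforces the upper bounds $|\rho^{-1}(c)|\le d_c$, not the lower ones; the total-unimodularity argument is the one that actually delivers both bounds simultaneously, and it does so correctly.
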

\begin{proof}
Let $L$ be a $(k+1)$-assignment for $G$.  Let $\widetilde{L}$ be any huing; Lemma~\ref{lem:hues} gives a proportional $L$-labelling $\varphi$ of $G$.
Define the $k$-assignment $L'$ by $L'(v)=L(v)\setminus\{\varphi(v)\}$.
By the hypothesis, $G$ has a proportional $L'$-coloring~$f$. Let $c\in\mathcal{L}$.

If $c$ is well distributed in $L$, then $\eta_{L'}(c)= k\cdot\frac{\eta_L(c)}{k+1}$, so that $c$ is well distributed in $L'$ and $|f^{-1}(c)|=\frac{\eta_{L'}(c)}{k}=\frac{\eta_L(c)}{k+1}$ as required.

If $c$ is not well distributed in $L$, say $\eta_L(c)=(k+1)q_c+r_c$ with $0<r_c\leq k$, then $c$ is used $q_c+\delta_c$ times by $\varphi$, where $\delta_c=0$ if $c$ is used almost deficiently with respect to $\varphi$ and $\delta_c=1$ if $c$ is used almost excessively with respect to $\varphi$.  So
\[ \eta_{L'}(c) = ((k+1)q_c+r_c)-(q_c+\delta_c) = kq_c+r_c-\delta_c. \]

This gives $|f^{-1}(c)|\geq \floor{\frac{\eta_{L'}(c)}{k}} \geq q_c$, and $|f^{-1}(c)|\leq \ceil{\frac{\eta_{L'}(c)}{k}} \leq q_c+1$.  Thus $f$ is a proportional $L$-coloring of $G$.
\end{proof}

Having proven monotonicity in $k$, we are ready for a definition.  For any graph $G$, the \emph{proportional choice number} of $G$, denoted $\chi_{pc}(G)$, is the smallest $k$ such that $G$ is proportionally $k$-choosable.  By Proposition~\ref{pro: monoink}, we know that any graph $G$ is proportionally $k$-choosable if and only if $k \geq \chi_{pc}(G)$.

The next result shows that the property of proportional $k$-choosability is monotone in the subgraph relation.  This is another fact of which we will make frequent use.

\begin{pro} \label{lem: monotone}
Suppose $H$ is a subgraph of $G$.  If $G$ is proportionally $k$-choosable, then $H$ is proportionally $k$-choosable.
\end{pro}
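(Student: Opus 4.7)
The plan is to prove the contrapositive flavor directly: assuming $G$ is proportionally $k$-choosable, I will take an arbitrary $k$-assignment $L$ for $H$ and construct a $k$-assignment $L^*$ for the bigger graph $G$ so that the hypothesis on $G$ produces a proportional $L^*$-coloring of $G$ whose restriction to $V(H)$ is a proportional $L$-coloring of $H$.

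The construction is the obvious one: for every vertex $w \in V(G) \setminus V(H)$, introduce a brand-new set $S_w$ of $k$ colors such that the sets $S_w$ are pairwise disjoint and disjoint from $\mathcal{L}$, the palette of $L$. Define $L^*(v) = L(v)$ for $v \in V(H)$ and $L^*(w) = S_w$ for $w \in V(G) \setminus V(H)$. Then $L^*$ is a $k$-assignment for $G$. The key features are: (a) for every original color $c \in \mathcal{L}$ we have $\eta_{L^*}(c) = \eta_L(c)$, since the new lists contain no color of $\mathcal{L}$; and (b) for every new color $c \in S_w$ we have $\eta_{L^*}(c) = 1$, so the proportional constraint $\lfloor 1/k \rfloor \leq |f^{-1}(c)| \leq \lceil 1/k \rceil$ on such a color is automatic (any use count of $0$ or $1$ is fine, assuming $k\geq 1$; the case $k=0$ is vacuous unless $V(G)=\emptyset$).

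Since $G$ is proportionally $k$-choosable, there is a proportional $L^*$-coloring $f$ of $G$. Let $g = f|_{V(H)}$. Because $f$ is a proper coloring of $G$ and $H$ is a subgraph of $G$, $g$ is a proper $L$-coloring of $H$. To check the proportionality of $g$, note that for every $c \in \mathcal{L}$, every vertex $v$ with $f(v)=c$ must satisfy $c \in L^*(v)$; but $c$ appears in no list $L^*(w)$ for $w \notin V(H)$, so all such $v$ lie in $V(H)$. Therefore
\[
|g^{-1}(c)| = |f^{-1}(c)| \in \Bigl\{\bigl\lfloor \eta_{L^*}(c)/k\bigr\rfloor, \bigl\lceil \eta_{L^*}(c)/k\bigr\rceil\Bigr\} = \Bigl\{\bigl\lfloor \eta_L(c)/k\bigr\rfloor, \bigl\lceil \eta_L(c)/k\bigr\rceil\Bigr\},
\]
which is exactly the proportionality condition for $g$.

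There is no real obstacle here; the whole argument hinges on the single trick of padding $V(G)\setminus V(H)$ with mutually disjoint fresh palettes so that (i) the multiplicities of the old colors are preserved, and (ii) the old colors are forbidden from being used on the new vertices, forcing all occurrences of colors of $\mathcal{L}$ in the $L^*$-coloring to sit inside $V(H)$. The only mild sanity check is that the spanning case $V(H) = V(G)$ is handled trivially by taking $L^* = L$, since a proportional $L$-coloring of $G$ is automatically a proportional $L$-coloring of any subgraph on the same vertex set.
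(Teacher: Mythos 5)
Your proposal is correct and uses essentially the same idea as the paper: extend the given $k$-assignment on $H$ to one on $G$ by giving the vertices of $V(G)\setminus V(H)$ lists of fresh colors disjoint from the original palette, so that the old colors keep their multiplicities and can only be used on $V(H)$, whence restricting a proportional coloring of $G$ gives one of $H$. The only (immaterial) difference is that you give each new vertex its own disjoint palette, making the constraints on the new colors vacuous, whereas the paper gives all new vertices one common list $\{M+1,\dots,M+k\}$.
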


\begin{proof}
Let $L'$ be an arbitrary $k$-assignment for $H$.  Let $\mathcal{L}' = \bigcup_{v \in V(H)} L'(v)$, and assume $\mathcal{L}' \subset \N$.  Let $M = \max \mathcal{L}'$.  Now, let $L$ be the $k$-assignment for $G$ given by:
\[
  L(v) =
  \begin{cases}
                                   \{M+1, M+2, \ldots, M+k \} & \text{if} \; v \in V(G)-V(H) \\
                                   L'(v) & \text{if} \; v \in V(H)
  \end{cases}
\]
Since $G$ is proportionally $k$-choosable, there is a proportional $L$-coloring, $f$, of $G$.  We note that if we restrict $f$ to $V(H)$ we obtain a proportional $L'$-coloring of $H$.
\end{proof}

\subsection{First Upper Bounds}  We begin with an easy upper bound on the proportional choice number of a graph.

\begin{pro} \label{pro: order}
For any $G$, $\chi_{pc}(G) \leq |V(G)|$.
\end{pro}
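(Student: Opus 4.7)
Let $n=|V(G)|$ and let $L$ be an arbitrary $n$-assignment for $G$. The plan is to use the huing machinery just developed (Lemma~\ref{lem:hues}) which makes this bound essentially immediate. The key observation is that when $k=n$, every color $c \in \mathcal{L}$ satisfies $1 \le \eta(c) \le n$, so $\lceil \eta(c)/n\rceil = 1$ and $\lfloor \eta(c)/n\rfloor \in \{0,1\}$. Thus being a proportional $L$-coloring amounts to saying that each color is used at most once, and that every color with $\eta(c)=n$ is used exactly once.

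Now form any huing $\widetilde{L}$ of $(G,L)$. Since $\lceil \eta(c)/n \rceil = 1$ for every $c \in \mathcal{L}$, each color contributes exactly one hue in $\widetilde{L}$ (its scarce hue, unless $\eta(c)=n$). Consequently, no two distinct hues of the same color exist anywhere, and so the condition for $\widetilde{L}$ to be a \emph{good} huing, that $G$ has no edge joining two vertices whose lists contain different hues of the same color, is satisfied vacuously. Applying Lemma~\ref{lem:hues} to this good huing yields a proportional $L$-coloring of $G$.

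There is essentially no obstacle here; the main point is simply to notice that the huing lemma trivializes once $k$ reaches $|V(G)|$. If one preferred a self-contained argument, the same result could be proved directly via Hall's Theorem applied to the bipartite graph $B$ with parts $V(G)$ and $\mathcal{L}$ and edges $vc$ whenever $c\in L(v)$: any $S \subseteq V(G)$ has $|N_B(S)| \ge n \ge |S|$ because each vertex of $S$ already has $n$ neighbors in $B$, and an easy exchange argument (or a second application of Hall's Theorem to the subgraph induced by universal colors) can be used to insist that every color $c$ with $\eta(c)=n$ appears in the resulting matching. Either way we obtain a proper $L$-coloring satisfying the proportional constraints, so $G$ is proportionally $n$-choosable.
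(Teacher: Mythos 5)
Your proof is correct, but it takes a genuinely different route from the paper's. The paper argues directly and elementarily: it first identifies the set $A$ of colors with $\eta(c)=|V(G)|$ (which must each be used exactly once), assigns those injectively to $|A|$ distinct vertices, deletes them from the remaining lists, and then greedily colors the remaining $|V(G)|-|A|$ vertices with pairwise distinct colors from lists that still have size at least $|V(G)|-|A|$. You instead observe that when $k=|V(G)|$ every color satisfies $1\le\eta(c)\le k$, hence $\lceil\eta(c)/k\rceil=1$, so every color has exactly one hue and \emph{every} huing is vacuously good; Lemma~\ref{lem:hues} then hands you a proportional $L$-coloring at once. This is a legitimate and rather clean use of the machinery already established in Section~\ref{easyresults} (the lemma precedes the proposition in the paper, so there is no circularity), and it isolates a nice structural fact --- the huing construction trivializes once $k$ reaches $|V(G)|$. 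What the paper's approach buys is self-containment: it needs nothing beyond a greedy argument, whereas yours rides on Hall's Theorem via Lemmas~\ref{lem:allmultk}--\ref{lem:hues}. Your closing sketch of a direct Hall's-Theorem argument is only a sketch (the ``easy exchange argument'' forcing every color with $\eta(c)=n$ into the matching is exactly the nontrivial half of the paper's proof and would need to be spelled out), but since your primary argument via Lemma~\ref{lem:hues} is complete, the proof stands.
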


\begin{proof}
We need to show that $G$ is proportionally $|V(G)|$-choosable.  Fix a $|V(G)|$-assignment, $L$, for $G$.  For each $c \in \mathcal{L}$, $\eta(c)/|V(G)| \leq |V(G)|/|V(G)| = 1$.  So, we must pick each color at most once.  Furthermore, we must use $c \in \mathcal{L}$ exactly once if and only if $\eta(c)=|V(G)|$.  Let $A$ be the set of colors that must be used exactly once.  Assign each color in $A$ to one of $|A|$ distinct vertices (pick any subset of $V(G)$ of size $|A|$) in a one-to-one fashion, and remove those colors from the lists of the other vertices.  It remains to color $|V(G)|-|A|$ vertices with $|V(G)|-|A|$ distinct colors from lists of size $|V(G)|-|A|$, which we can do greedily.
\end{proof}

It is easy to see that the bound in Proposition~\ref{pro: order} is tight since $\chi_{pc}(K_n)=n$.  With a bit more effort, we can slightly extend Proposition~\ref{pro: order} by showing that $K_n$ is the only graph achieving equality there.

\begin{pro} \label{pro: orderplus1}
Suppose that $G$ is a graph on $k+1$ vertices such that $G \neq K_{k+1}$.  Then, $G$ is proportionally $k$-choosable. That is, $\chi_{pc}(G) \leq |V(G)|-1$ whenever $G$ is not a complete graph.
\end{pro}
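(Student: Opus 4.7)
The plan is induction on $k$. The base case $k=1$ is immediate: $G$ has two vertices and no edges, and assigning each its unique listed color gives a proportional $L$-coloring. For the inductive step, fix a $k$-assignment $L$ and let $A = \{c \in \mathcal{L} : \eta(c) = k+1\}$ and $B = \{c \in \mathcal{L} : \eta(c) = k\}$; in any proportional $L$-coloring each color of $A$ is used once or twice, each color of $B$ exactly once, and every other color at most once.

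If $A = \emptyset$, every color has multiplicity at most $k$, so no color has more than one hue; the trivial huing is therefore automatically good and Lemma~\ref{lem:hues} produces a proportional $L$-coloring. Otherwise $|A| \geq 1$, and a short count from $\sum_c \eta(c) = k(k+1)$ yields $|B| \leq k-1$. Define $m: B \to V(G)$ by sending $b$ to the unique vertex omitting $b$, so $T := V(G) \setminus \ran(m)$ has size at least $2$ and every $v \in T$ contains $A \cup B$ in its list. Pick $c^* \in A$. Whenever some $v^* \in T$ satisfies $G - v^* \neq K_k$, the induction hypothesis (applicable since $|V(G-v^*)| = k$ and $G - v^* \neq K_k$) produces a proportional $L'$-coloring $f'$ of $G - v^*$ with $L'(w) := L(w) \setminus \{c^*\}$; extending by $f(v^*) = c^*$ yields a proper $L$-coloring of $G$. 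Proportionality follows from routine case checking: the choice $v^* \in T$ guarantees $\eta_{L'}(b) = k - 1$ for each $b \in B$, forcing $|f'^{-1}(b)| = 1$ exactly, while the other color classes satisfy the $L$-proportionality bounds with room to spare.

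The main obstacle is the remaining case in which $G - v^* = K_k$ for every $v^* \in T$. A short structural argument shows that two distinct removals each producing a $K_k$ force $G$ to be $K_{k+1}$ minus possibly the single edge between the two removed vertices; combining this with $G \neq K_{k+1}$ and $|T| \geq 2$ pins down $|T| = 2$ and $G = K_{k+1} - u_1u_2$ for $T = \{u_1,u_2\}$, and then tight counting forces $|A| = 1$, $|B| = k-1$, $m$ a bijection onto $V(G) \setminus T = \{w_1,\ldots,w_{k-1}\}$, $L(u_1) = L(u_2) = A \cup B$, and $L(w_i) = (A \cup B \setminus \{b_i\}) \cup \{d_i\}$ for some $d_i \notin A \cup B$. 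In this sub-case I construct $f$ by hand: fix any index $i^*$, set $f(w_{i^*}) = d_{i^*}$, and extend by any perfect matching between the remaining $k$ vertices $\{u_1,u_2\} \cup \{w_j : j \neq i^*\}$ and the $k$ colors of $A \cup B$ subject to $f(w_j) \neq b_j$ for $j \neq i^*$. Such a matching exists by Hall's theorem (each $u_\ell$ admits every color and each $w_j$ excludes only $b_j$), and the resulting $f$ uses $c^*$ once, each $b \in B$ once, and $d_{i^*}$ once, making it both proper and proportional.
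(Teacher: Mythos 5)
Your proof is correct, but it follows a genuinely different route from the paper's. The paper argues directly and greedily: it treats constant assignments separately, then (for non-constant $L$) picks a representative vertex for each color of multiplicity at least $k$, and finishes by giving the remaining $k-m+1$ vertices pairwise distinct colors from their residual lists, with a small swap to escape the one obstruction (the residual lists all being equal); the case where all $k+1$ colors have multiplicity $k$ is resolved by an explicit cyclic assignment. You instead induct on $k$: when no color appears in every list, all multiplicities are at most $k$, every huing is trivially good, and Lemma~\ref{lem:hues} finishes immediately; otherwise you delete a vertex of $T$ together with a universal color $c^*$ to get a $(k-1)$-assignment of a $k$-vertex graph and invoke the inductive hypothesis, and your bookkeeping that $\eta_{L'}(b)=k-1$ for $b\in B$ and $\eta_{L'}(a)=k$ for $a\in A\setminus\{c^*\}$ does transfer proportionality back to $L$ correctly. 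The exceptional case where every vertex of $T$ leaves a $K_k$ is correctly pinned down to $G=K_{k+1}$ minus one edge with a completely determined list structure, and your Hall's-theorem construction there produces a rainbow (hence proper and proportional) coloring. What your approach buys is reuse of the matching machinery already built in Section~\ref{easyresults} and a clean reduction in $k$; what it costs is the delicate verification that proportionality survives the vertex-and-color deletion and a separate analysis of the near-complete configuration, whereas the paper's argument is self-contained and purely greedy. One presentational caveat: the phrase ``routine case checking'' in your inductive step is doing real work (in particular, that colors of $A\setminus\{c^*\}$ may legitimately be used twice under $L'$ because $\lceil k/(k-1)\rceil=2$), so those inequalities should be written out rather than waved at.
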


\begin{proof}
Note that the result is obvious when $k=0,1$.  So, we assume that $k \geq 2$ throughout this proof.  Suppose that $L$ is an arbitrary $k$-assignment for $G$.  We must show that there is a proportional $L$-coloring for $G$.  First, note that a proportional $L$-coloring clearly exists when $L$ is a constant $k$-assignment (i.e. a list assignment where all the lists are the same) since in this case we can color 2 nonadjacent vertices of $G$ with the same color and then greedily color what remains to obtain a proportional $L$-coloring.

\par

So, we assume that $L$ is a non-constant $k$-assignment for $G$.  We let
$$A = \{ c \in \mathcal{L} : \eta(c) \geq k \}.$$
Note that a proportional $L$-coloring of $G$ must use each color in $A$ at least one time.  Moreover, for any color in $\mathcal{L} - A$, a proportional $L$-coloring must use the color 0 or 1 time.  Suppose $A = \{c_1, \ldots, c_m \}$.  Clearly $0 \leq m \leq k+1$.  We will prove the desired in the case that (1) $m \leq k$, and in the case that (2) $m= k + 1$.

\par

For case (1), we find a vertex, $v_1 \in V(G)$, such that $c_1 \in L(v_1)$, and we color $v_1$ with $c_1$.  Then, we proceed inductively, and for each $j$ satisfying $2 \leq j \leq m$, we find a vertex, $v_j \in V(G) - \{v_1, \ldots, v_{j-1} \}$, such that $c_j \in L(v_j)$ (such a vertex must exist since $c_j$ appears in the list corresponding to at least $k$ vertices in $V(G)$), and we color $v_j$ with $c_j$.  We let $S= \{v_1, \ldots, v_m \}$ (note that $S$ will be empty if $m=0$), and we let $S'=V(G)-S$.  Now, for each vertex $v \in S'$, let $L'(v) = L(v)-A$.  Note that each color in $\bigcup_{v \in S'} L'(v)$ has multiplicity less than $k$ in $L$.  So, we can complete a proportional $L$-coloring of $G$ if we can find a proper $L'$-coloring of $G[S']$ that uses $|S'|=k-m+1$ pairwise distinct colors.

\par

We note that $|L'(v)| \geq k-m$ for each $v \in S'$.  We can find a proper $L'$-coloring of $G[S']$ that uses $|S'|=k-m+1$ pairwise distinct colors if $L'$ is not a constant $(k-m)$-assignment for $G[S']$.  So, we assume $L'$ is a constant $(k-m)$-assignment for $G[S']$ (note that since $L$ is not a constant $k$-assignment, it must be that $m \geq 1$).  Specifically, suppose that $L'(v) = B$ for each $v \in S'$.  This means that $L(v) = B \cup A$ for each $v \in S'$.  Since $L$ is not a constant $k$-assignment, there must be some $v_i \in S$ such that $L(v_i) \neq B \cup A$.  Now, pick any vertex $u \in S'$ and color it with $c_i$.  Then, make it so that $v_i$ is uncolored.  Now, let
$$S'' = (S' - \{u \}) \cup \{v_i \}$$
and let $L''(v) = L(v)-A$ for each $v \in S''$.  Note that each color in $\bigcup_{v \in S''} L''(v)$ has multiplicity less than $k$ in $L$.  Since $L(v_i) \neq B \cup A$, we have that $L''$ is not a constant $(k-m)$-assignment for $G[S'']$.  Thus, we can find a proper $L''$-coloring of $G[S'']$ that uses $|S''|=k-m+1$ pairwise distinct colors which completes a proportional $L$-coloring of $G$.  This completes case (1).

\par

For case (2) we suppose that $m = k+1$.  In this case note that since each color in $A$ has multiplicity at least $k$ in $L$,
$$ k(k+1) \leq \sum_{i=1}^{k+1} \eta(c_i).$$
Also, since $L$ assigns to each vertex in $G$ a list of $k$ colors, we know that
$$ k(k+1) \leq \sum_{i=1}^{k+1} \eta(c_i) \leq \sum_{c \in \mathcal{L}} \eta(c) = k|V(G)| = k(k+1). $$
So, it must be that each color in $A$ has multiplicity $k$ and $A = \mathcal{L}$.  So, we may assume that $\mathcal{L} = \{1, 2, \ldots, k+1 \}$, and $L(v)$ is a $k$-element subset of $\mathcal{L}$ for each $v \in V(G)$.  Moreover, since each color in $A$ does not appear in exactly one list obtained from $L$, we know that each $k$-element subset of $\mathcal{L}$ is assigned to exactly one vertex in $V(G)$ by $L$.  We let $v_i \in V(G)$ be the vertex with the property $i \notin L(v_i)$.  To obtain a proportional $L$-coloring of $G$, we simply color $v_i$ with $i+1$ for each $1 \leq i \leq k$, and we color $v_{k+1}$ with 1. This completes case (2), and we are finished.
\end{proof}

\subsection{Lower Bounds}

We now make some progress on Theorem~\ref{thm: fullstars} by showing that stars with too many leaves fail to be proportionally $k$-choosable.

\begin{pro} \label{pro: starsec}
$K_{1,2k-1}$ is not proportionally $k$-choosable for each $k \in \N$.
\end{pro}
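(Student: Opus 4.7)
The plan is to exhibit a specific $k$-assignment that defeats proportional coloring. Let $u$ be the center of $K_{1,2k-1}$ and $v_1,\ldots,v_{2k-1}$ be the leaves, so $|V(K_{1,2k-1})| = 2k$. Define the constant $k$-assignment $L$ by $L(v) = \{1,2,\ldots,k\}$ for every $v \in V(K_{1,2k-1})$.

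Next, I would compute the multiplicities: since every vertex has every color in its list, $\eta(c) = 2k$ for each $c \in \{1,\ldots,k\}$. Hence $\lfloor \eta(c)/k \rfloor = \lceil \eta(c)/k \rceil = 2$, so any proportional $L$-coloring must use each color exactly twice. The total vertex count confirms consistency: $k \cdot 2 = 2k = |V(K_{1,2k-1})|$.

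Now I would derive the contradiction. In any proper $L$-coloring $f$, the color $c_0 = f(u)$ cannot be assigned to any leaf, since every leaf is adjacent to $u$. Therefore $|f^{-1}(c_0)| = 1 \neq 2$, so $f$ is not proportional. Since $L$ is a $k$-assignment with no proportional $L$-coloring, $K_{1,2k-1}$ is not proportionally $k$-choosable.

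There is no real obstacle here; the only thing to be careful about is making sure the arithmetic $\eta(c)/k = 2$ is exact so that the proportional requirement pins every color to exactly two occurrences, leaving no slack for the color used on the center.
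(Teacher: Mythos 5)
Your proof is correct and is essentially the paper's argument made self-contained: the paper observes that the center's color must be used exactly once while $\lfloor |V(G)|/k\rfloor = 2$, concludes $K_{1,2k-1}$ is not equitably $k$-colorable, and invokes Proposition~\ref{pro: motivation}; your constant list assignment $L(v)=\{1,\dots,k\}$ is precisely the assignment used in the proof of that proposition, so you have just inlined the reduction.
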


\begin{proof}
Let $G = K_{1,2k-1}$.  Note that in any proper $k$-coloring of $G$, the color used on the vertex in the partite set of size one must be used exactly once.  However, $\lfloor |V(G)|/k \rfloor = 2$.  So, $G$ is not equitably $k$-colorable, and Proposition~\ref{pro: motivation} implies $G$ is not proportionally $k$-choosable.
\end{proof}

Propositions~\ref{lem: monotone} and~\ref{pro: starsec} immediately yield the following which is a negative result in the spirit of Question~\ref{ques: fullHS}.

\begin{cor} \label{cor: maxdegreesec}
Suppose that $G$ is a graph with $\Delta(G) \geq 2k-1$ for $k \in \N$.  Then, $G$ is not proportionally $k$-choosable.  That is,
$$\chi_{pc}(G) > \frac{\Delta(G)+1}{2}$$
for any graph $G$.
\end{cor}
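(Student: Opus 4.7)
The plan is to reduce the statement immediately to Propositions~\ref{lem: monotone} and~\ref{pro: starsec}. Observe that any graph $G$ with $\Delta(G) \geq 2k-1$ contains a copy of $K_{1,2k-1}$ as a (spanning-on-$2k$-vertices) subgraph: pick a vertex $v$ realizing $\Delta(G)$, select any $2k-1$ of its neighbors, and take the subgraph on these $2k$ vertices keeping only the edges incident to $v$. This subgraph is isomorphic to $K_{1,2k-1}$.

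Suppose, toward contradiction, that $G$ is proportionally $k$-choosable. By Proposition~\ref{lem: monotone} (monotonicity under subgraphs), every subgraph of $G$ is proportionally $k$-choosable; in particular, the copy of $K_{1,2k-1}$ identified above must be proportionally $k$-choosable. But Proposition~\ref{pro: starsec} asserts that $K_{1,2k-1}$ is \emph{not} proportionally $k$-choosable, which is a contradiction. Hence $G$ is not proportionally $k$-choosable, as claimed.

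To deduce the quantitative inequality, let $k = \chi_{pc}(G)$, so that $G$ is proportionally $k$-choosable. By the first part, we cannot have $\Delta(G) \geq 2k-1$, so $\Delta(G) \leq 2k-2$, which rearranges to
\[
\chi_{pc}(G) = k \geq \frac{\Delta(G)+2}{2} > \frac{\Delta(G)+1}{2}.
\]
There is no genuine obstacle in this argument: the entire content lies in spotting the star subgraph and invoking the two previously established propositions.
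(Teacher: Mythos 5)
Your proposal is correct and is exactly the paper's argument: the authors state that the corollary follows immediately from Propositions~\ref{lem: monotone} and~\ref{pro: starsec}, i.e., a vertex of degree at least $2k-1$ yields a $K_{1,2k-1}$ subgraph, which by subgraph monotonicity would have to be proportionally $k$-choosable if $G$ were. Your derivation of the quantitative inequality from the contrapositive is also valid.
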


We now present the result that lead to the formulation of Question~\ref{ques: proOhba}.  One should notice that the result below is not implied by Corollary~\ref{cor: maxdegreesec} since $(\Delta(K_{2*m}) +1)/2 = m - \frac{1}{2}$.

\begin{pro} \label{pro: complete2}
If $m \geq 2$, then $\chi_{pc}(K_{2*m}) > m$.
\end{pro}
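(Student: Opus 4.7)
The plan is to exhibit an explicit $m$-assignment $L$ for $K_{2*m}$ that admits no proportional $L$-coloring; the construction generalizes the $m=2$ instance $L(v_1)=L(w_1)=\{1,2\}$, $L(v_2)=L(w_2)=\{2,3\}$. Label the partite sets $P_i=\{v_i,w_i\}$ and set
$$L(v_i)=L(w_i)=\{1,2,\ldots,m-1,\,m-1+i\}\qquad(i=1,\ldots,m).$$
This uses $2m-1$ colors: the $m-1$ \emph{common} colors $1,\ldots,m-1$ each have $\eta(j)=2m$ (they lie in every list), while the $m$ \emph{unique} colors $m,\ldots,2m-1$ each have $\eta(m-1+i)=2$ (each appears in a single list). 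With $k=m\geq 2$, proportionality forces each common color to be used exactly $2$ times and each unique color to be used at most $\lceil 2/m\rceil=1$ time.

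The structural heart of the argument is the following: in $K_{2*m}$, any color used twice in a proper coloring must sit on both vertices of a single partite set (the only non-adjacent pairs), and no partite set can simultaneously accommodate two such ``doubled'' colors (it has only two vertices). Consequently the $m-1$ common colors occupy $m-1$ distinct partite sets, leaving exactly one uncovered partite set $P_{i^*}$. Its list $L_{i^*}$, after deleting the now-exhausted common colors, contains only the single color $m-1+i^*$, so both vertices of $P_{i^*}$ are forced to receive that color. But then $m-1+i^*$ is used twice, contradicting the upper bound of~$1$. Hence no proportional $L$-coloring exists, and $\chi_{pc}(K_{2*m})>m$.

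The creative work sits entirely in the design of $L$; once the common/unique dichotomy is fixed, the multiplicity computation and the forcing argument are routine. I expect the main obstacle to be the initial hunt for a list assignment in which forced placements of several high-multiplicity colors corner one partite set into a singleton tail list whose lone color is itself too low-multiplicity to be used on both vertices---precisely the mechanism already present in the $m=2$ case.
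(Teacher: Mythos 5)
Your proposal is correct and is essentially identical to the paper's proof: it uses the same list assignment $L(v)=\{1,\ldots,m-1\}\cup\{m-1+i\}$ on partite set $i$ and the same forcing argument that the $m-1$ common colors must each occupy a distinct partite set, cornering the last partite set into doubling its unique low-multiplicity color.
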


\begin{proof}
Suppose that $G$ is a copy of $K_{2*m}$ with partite sets $A_1, A_2, \ldots, A_m$.  For $1 \leq i \leq m$, suppose $A_i = \{a_{i,1}, a_{i,2} \}$.  Now, suppose that $L$ is the $m$-assignment for $G$ obtained by letting $L(a_{i,l}) = \{1, 2, \ldots, m-1 \} \cup \{m-1+i \}$ for each $i=1,2, \ldots, m$ and $l=1,2$.  Notice that $\mathcal{L} = \{1, 2, \ldots, 2m-1 \}$.  Also, $\eta(j) = 2m$ when $1 \leq j \leq m-1$, and $\eta(j)=2$ when $m \leq j \leq 2m-1$.

\par

For the sake of contradiction, suppose that $f$ is a proportional $L$-coloring of $G$.  We have that $f$ must use each of the colors: $1, 2, \ldots, m-1$ exactly 2 times, and $f$ must use all other colors in $\mathcal{L}$ at most one time.  Since $\alpha(G)=2$, and there are exactly $m$ independent sets in $G$ of size 2 (namely $A_1, A_2, \ldots, A_m$), we may assume without loss of generality that $f^{-1}(j) = A_j$ for $j=1, 2, \ldots, m-1$.  Then, in order for $f$ to be a proper coloring, $f$ must color both of the vertices in $A_m$ with the color $2m-1$.  This however contradicts the fact that $f$ uses $2m-1$ at most one time.  So, there is no proportional $L$-coloring of $G$, and we know that $G$ is not proportionally $m$-choosable.
\end{proof}

We now present two results which show that certain forests and complete bipartite graphs of max degree $\Delta$ need not be proportionally $\Delta$-choosable.

\begin{pro} \label{pro: maxdegreesec}
Suppose $H_1, H_2, \ldots, H_k$ are $k$ pairwise vertex disjoint copies of $K_{1,k}$.  If $G = \sum_{i=1}^k H_i$, then $G$ is not proportionally $k$-choosable.
\end{pro}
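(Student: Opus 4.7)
The plan is to construct an explicit $k$-assignment $L$ of $G$ for which no proportional $L$-coloring of $G$ exists. Denote the center of $H_i$ by $u_i$ and its leaves by $v_{i,1},\ldots,v_{i,k}$.

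My candidate construction assigns every vertex of $H_i$ the common list $L_i=\{1,2,\ldots,k-1\}\cup\{k-1+i\}$, consisting of $k-1$ \emph{shared} colors together with a single \emph{star-specific} color.  Under this assignment, $\eta(c)=k(k+1)$ for each shared color $c\in\{1,\ldots,k-1\}$ and $\eta(c)=k+1$ for each star-specific color $c\in\{k,\ldots,2k-1\}$.  Hence any hypothetical proportional $L$-coloring must use each shared color exactly $k+1$ times and each star-specific color either $1$ or $2$ times.  Applying Lemma~\ref{lem: countexcessive} with remainders $r_c=0$ for shared colors and $r_c=1$ for each of the $k$ star-specific colors, exactly one star-specific color is used twice.

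Next, I would case-split on the local behavior in each $H_i$.  Because $L_i$ is constant on $V(H_i)$, any proper $L_i$-coloring of $H_i$ is determined by the center color $c_i$ and the distribution of the $k$ leaves among the remaining $k-1$ colors of $L_i$.  Call $H_i$ \emph{type E} if $c_i=k-1+i$, so all $k$ leaves lie in $\{1,\ldots,k-1\}$; otherwise call it \emph{type C}, and let $a_i$ denote the number of leaves of $H_i$ colored $k-1+i$.  The counts above then split the type C stars into those with $a_i=1$ and the single one with $a_i=2$.

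The main obstacle, and the principal step of the proof, is to derive a contradiction by double-counting leaves bearing shared colors.  Letting $e$ be the number of type E stars and $x_c$ the number of centers colored $c\in\{1,\ldots,k-1\}$, the required number of leaves of shared color $c$ equals $k+1-x_c$, with $\sum_c x_c=k-e$.  Meanwhile, a type C star can distribute its $k-a_i\in\{k-1,k-2\}$ common-colored leaves only among the $k-2$ shared colors different from $c_i$.  This tight local restriction rigidly constrains the per-star matrix of leaf counts, and combined with the global column sums must yield a distribution impossibility.  I expect the hardest part of the argument to be handling the narrow extremal regimes (notably $e=k-1$, where nearly every center takes its star-specific color, and the case $k=2$ where the restriction ``$k-1$ leaves in $k-2$ colors'' degenerates); if the symmetric construction above permits these degenerate configurations, a slight perturbation—such as altering one leaf's list in each star to preclude the type E pattern—should restore the forcing and complete the proof.
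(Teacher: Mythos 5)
There is a genuine gap: your candidate list assignment does not work, so the ``distribution impossibility'' you hope to extract from the double count does not exist. With $L_i=\{1,\ldots,k-1\}\cup\{k-1+i\}$ on all of $V(H_i)$, a proportional $L$-coloring does exist for every $k\geq 2$. Concretely, for $k=3$ color the center of $H_1$ with $3$ and its leaves $1,1,2$; the center of $H_2$ with $4$ and its leaves $1,2,2$; the center of $H_3$ with $1$ and its leaves $2,5,5$. Then colors $1$ and $2$ are each used $4=\eta/k$ times, color $5$ is used twice, and colors $3,4$ once, which is proportional. In general one can color the centers of $H_1,\ldots,H_{k-1}$ with their star-specific colors, distribute their $(k-1)k$ leaves over the shared colors so that each shared color receives $k$ leaves, color the center of $H_k$ with color $1$, and color its leaves with $2,\ldots,k-1$ plus two copies of $2k-1$. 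The underlying problem is that your assignment is too symmetric: within each star the leaves can freely absorb either shared colors or the star-specific color, and the centers can do likewise, so the global counts can always be balanced. Your suggested rescue (perturbing one leaf list per star to kill ``type E'') is not carried out and does not obviously help, since the coloring above for $k=3$ already mixes type E and type C stars.

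The paper's construction avoids this by breaking the center/leaf symmetry. It gives every center the list $\{1,\ldots,k\}$ and gives the leaves of $H_i$ the list $\{1\}$ together with $k-1$ colors used only in $H_i$'s leaves, so that $\eta(1)=k^2+k$ while every other color has multiplicity exactly $k$ and hence may be used at most once. A center colored $1$ would force its $k$ leaves onto $k-1$ once-usable colors (pigeonhole contradiction), so all $k$ centers must be colored from $\{2,\ldots,k\}$, again a pigeonhole contradiction. If you want to salvage your approach you should redesign the lists so that, as in the paper, all colors other than one universal color are capped at a single use; the clean two-step pigeonhole argument then replaces the delicate matrix-of-leaf-counts analysis you were anticipating.
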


\begin{proof}
The result is obvious when $k=1$.  So, we may assume that $k \geq 2$ throughout the proof.  For $1 \leq i \leq k$ suppose that $H_i$ has bipartition $A_i$, $B_i$ where $A_i = \{a_i \}$ and $B_i = \{b_{k(i-1)}, b_{k(i-1)+1}, \ldots, b_{ki-1} \}$.  We will now construct a $k$-assignment, $L$, for $G$ such that there is no proportional $L$-coloring for $G$.

\par

To begin, we let $L(a_i) = \{1, 2, \ldots, k \}$ for $1 \leq i \leq k$.  Then, for $1 \leq i \leq k$ and $0 \leq j \leq k-1$, we let
$$L(b_{k(i-1)+j}) = \{1, i(k-1)+2, i(k-1)+3, \ldots, (i+1)(k-1)+1 \}.$$
It is easy to see that
$$ \bigcup_{v \in V(G)} L(v) = \{1, 2, \ldots, k^2 \}.$$
Moreover, $\eta(1) = |V(G)| = k^2+k$ and $\eta(j) = k$ for each each $2 \leq j \leq k^2$.  Now, for the sake of contradiction, suppose that $f$ is a proportional $L$-coloring of $G$.

\par

We first claim that $f(a_i) \neq 1$ for each $1 \leq i \leq k$.  To see why, suppose that $f(a_l)=1$ for some $l$.  Since $f$ is a proper $L$-coloring, this implies that $f(B_l) \subseteq \{l(k-1)+2, \ldots, (l+1)(k-1)+1 \}$ which immediately implies that
$$|f(B_l)| \leq k-1.$$
Since $|B_l|=k$, the pigeonhole principle implies that there exists two vertices in $B_l$ that are given the same color, say $t$, by $f$.  Since $t > 1$, $t$ is used too many times by $f$ which is a contradiction.  Thus, $f(a_i) \neq 1$ for each $1 \leq i \leq k$.

\par

Now, we know that $f(\{a_1, \ldots, a_k \}) \subseteq \{2, 3, \ldots, k \}$ which immediately implies that
$$|f(\{a_1, \ldots, a_k \})| \leq k-1.$$
Since $|\{a_1, \ldots, a_k \}|=k$, the pigeonhole principle implies that there exist two vertices in $\{a_1, \ldots, a_k \}$ that are given the same color, say $t$, by $f$.  Since $t > 1$, $t$ is used too many times by $f$ which is a contradiction.  Thus, no proportional $L$-coloring of $G$ exists and our proof is complete.
\end{proof}

Since Proposition~\ref{pro: orderplus1} implies $K_{1,k}$ is proportionally $k$-choosable, Proposition~\ref{pro: maxdegreesec} also shows that, like equitable choosability, adding more components to a graph may make finding a proportional coloring more difficult.  Thus, techniques we use to prove results about proportional choosability of connected graphs may not work in the context of disconnected graphs.

\begin{pro} \label{pro: completebipartitesec}
$K_{m,m}$ is not proportionally $m$-choosable for each $m \in \N$.
\end{pro}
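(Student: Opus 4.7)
The plan is to exhibit, for each $m \in \N$, an $m$-assignment $L$ of $K_{m,m}$ admitting no proportional $L$-coloring. Write $A=\{a_1,\ldots,a_m\}$ and $B=\{b_1,\ldots,b_m\}$ for the two partite sets. The decisive structural observation, used throughout, is that in any proper coloring of $K_{m,m}$ every color class is an independent set and therefore lies entirely inside $A$ or entirely inside $B$; in particular, if a color is used exactly $2$ times in a proper coloring then its two uses are either both in $A$ or both in $B$.

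I would split the argument on the parity of $m$. For $m$ odd, set $L(v)=\{1,2,\ldots,m\}$ for every $v\in V(K_{m,m})$. Then $\eta(c)=2m$ for each $c$, so a proportional $L$-coloring $f$ must use every color exactly twice. By the key observation, each of the $m$ color classes sits entirely in $A$ or entirely in $B$, so the color classes contained in $A$ partition $A$ into pairs; but $|A|=m$ is odd, a contradiction.

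For $m$ even, set $L(a_i)=\{1,2,\ldots,m\}$ for every $i$ and $L(b_j)=\{2,3,\ldots,m,m+1\}$ for every $j$. Then $\eta(1)=\eta(m+1)=m$, while $\eta(c)=2m$ for every $c\in\{2,\ldots,m\}$. A proportional $L$-coloring $f$ must use each of the colors $1$ and $m+1$ exactly once and each middle color exactly twice. Since color $1$ appears only in lists of vertices in $A$, it must be assigned to some $a_i$; symmetrically $m+1$ must be assigned to some $b_j$. After removing those two vertices, the remaining $m-1$ vertices of $A$ must be partitioned into size-two color classes drawn from $\{2,\ldots,m\}$, but $m-1$ is odd, a contradiction.

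The main obstacle is that the symmetric constant assignment $L(v)=\{1,\ldots,m\}$ does admit proportional $L$-colorings when $m$ is even (pair up each partite set), so the even case requires perturbing the lists. The device of placing one private color on each side with multiplicity $m$ forces the two partite sets to each contain exactly one vertex colored by a ``singleton'' color, leaving an odd number of vertices in each part to be paired — which produces the desired parity obstruction.
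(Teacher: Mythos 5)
Your proof is correct, but it differs from the paper's in both the construction and the mechanism of contradiction, so a comparison is worthwhile. The paper uses a single assignment for all $m\geq 2$: $L(a_i)=\{1,\ldots,m\}$ and $L(b_i)=\{1,m+1,\ldots,2m-1\}$, so that the one shared color has multiplicity $2m$ and must be used twice, hence (by the independence observation you also rely on) on two vertices of one side, say $A$; the contradiction is then a pigeonhole count, since the $m-1$ private colors of $A$ each have multiplicity $m$ and must each be used exactly once, but only $m-2$ vertices of $A$ remain. You instead split on the parity of $m$ and extract a parity obstruction: size-two color classes confined to one side must pair up its vertices, and an odd number of vertices cannot be paired. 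Your odd case is really just the statement that $K_{m,m}$ is not equitably $m$-colorable for odd $m$ (so it would follow from Proposition~\ref{pro: motivation} applied to the constant assignment), and your even case is forced to perturb the lists precisely because the constant assignment fails there --- a point you correctly flag. What the paper's version buys is uniformity (one construction, no case split, and no need for the parity of $m$); what yours buys is a cleaner conceptual separation between the ``coloring'' obstruction (odd $m$) and the genuinely ``list'' obstruction (even $m$), which makes visible exactly where list assignments add power over ordinary equitable coloring.
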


\begin{proof}
First, note that the result is obvious when $m=1$.  So, we assume that $m \geq 2$, and let $G=K_{m,m}$.  Suppose that $G$ has bipartition $A$, $B$ where $A = \{a_1, \ldots, a_m \}$ and $B= \{b_1, \ldots, b_m \}.$   We will now construct an $m$-assignment, $L$, for $G$ such that there is no proportional $L$-coloring for $G$.  For $1 \leq i \leq m$, let $L(a_i) = \{1, 2, \ldots, m \}$ and $L(b_i) = \{1, m+1, m+2, \ldots, 2m-1 \}$.  Note that $\eta(1) = 2m$ and $\eta(j) = m$ for $2 \leq j \leq 2m-1$.

\par

For the sake of contradiction, suppose that $f$ is a proportional $L$-coloring of $G$.  Since $|f^{-1}(1)|=2$, we may assume without loss of generality that $f$ colors two vertices in $A$ with 1.  This however makes it impossible for $f$ to use each color in $\{2,3, \ldots, m \}$ exactly one time since the colors in $\{2,3, \ldots, m \}$ only appear in lists corresponding to vertices in $A$.  This is a contradiction, and we conclude there is no proportional $L$-coloring for $G$.
\end{proof}

Note that Proposition~\ref{pro: completebipartitesec} implies that the proportional analogue of Conjecture~\ref{conj: KPW2} does not hold.  Moreover, Propositions~\ref{lem: monotone} and~\ref{pro: maxdegreesec} make it easy to construct trees where the proportional analogue of Conjecture~\ref{conj: KPW2} does not hold.

\section{Proportional Choosability of Small Graphs} \label{smallorder}

In this section we prove Theorem~\ref{thm: smallorder}: $\chi_{pc}(G) \leq \Delta(G) + \lceil |V(G)|/2 \rceil$ for every graph $G$.  In order to prove Theorem~\ref{thm: smallorder}, we will prove two lemmas which will make the proof of Theorem~\ref{thm: smallorder} trivial.  We begin with a lemma that is interesting in its own right.

\begin{lem} \label{lem: algorithm}
Suppose $G$ is a graph, and suppose that $L$ is a $k$-assignment for $G$ such that
$$\max_{c \in \mathcal{L}} \eta(c) < 2k.$$
If there exists a proper $L$-coloring of $G$ that uses at most $t$ colors excessively, then there exists a proper $L$-coloring of $G$, $g$, such that no color in $\mathcal{L}$ is deficient with respect to $g$ and at most $t$ colors in $\mathcal{L}$ are excessive with respect to $g$.  In particular, if  there is a proper $L$-coloring of $G$ that uses no color $c \in \mathcal{L}$ excessively, then $G$ is proportionally $L$-colorable.
\end{lem}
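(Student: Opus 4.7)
The plan is to argue by an extremality principle. Call a color $c$ \emph{big} if $k<\eta(c)<2k$ (so $\lfloor\eta(c)/k\rfloor=1$ and $\lceil\eta(c)/k\rceil=2$) and \emph{small} if $\eta(c)\leq k$; since $\max_c\eta(c)<2k$, every color is of one of these types, and a color is deficient with respect to a coloring $h$ exactly when it is big with $h^{-1}(c)=\emptyset$. Call $c$ \emph{expendable} with respect to $h$ if $|h^{-1}(c)|>\lfloor\eta(c)/k\rfloor$, so that removing a single vertex from $h^{-1}(c)$ does not make $c$ deficient. I let $g$ be a proper $L$-coloring using at most $t$ colors excessively, chosen to minimize the number of deficient colors, and aim to show that this minimum is $0$. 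Suppose instead that $g$ has a deficient color $d$; I will construct a proper $L$-coloring $g'$ with at most $t$ excessive colors and strictly fewer deficient colors, contradicting the choice of $g$.

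The key tool is an alternating chain produced in an auxiliary digraph. Form $D_g$ on vertex set $\mathcal{L}$ with an arc $g(v)\to c'$ for every $v\in V(G)$ and every $c'\in L(v)\setminus\{g(v)\}$, and let $T\subseteq\mathcal{L}$ be the set of colors from which $d$ is reachable by a directed path in $D_g$. The heart of the argument is the claim that $T$ contains an expendable color. Suppose not. Then every $c\in T\setminus\{d\}$ lies in $\mathrm{image}(g)$ (for its outgoing path-arc to exist) and fails to be expendable, forcing $c$ to be big with $|g^{-1}(c)|=1$; write $v_c$ for the unique vertex with $g(v_c)=c$, and put $V_T=\{v_c:c\in T\setminus\{d\}\}$. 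For any $c\in T$ and any $u\in A_c:=\{v:c\in L(v)\}$, either $g(u)=c$ and so $u=v_c\in V_T$, or the arc $g(u)\to c$ places $g(u)\in T$; since $d\notin\mathrm{image}(g)$, the latter gives $g(u)\in T\setminus\{d\}$, and again $u=v_{g(u)}\in V_T$. Hence $A_c\subseteq V_T$ for every $c\in T$, and a two-way count of incidences $\{(c,u):c\in T,\ u\in A_c\}$ yields
\[ (k+1)|T|\leq\sum_{c\in T}\eta(c)=\sum_{u\in V_T}|L(u)\cap T|\leq k|V_T|=k(|T|-1), \]
whence $|T|\leq -k$, a contradiction.

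With an expendable $c_0\in T$ in hand, I take a shortest directed path $c_0\to c_1\to\cdots\to c_m=d$ in $D_g$; this path is simple, and by minimality every intermediate $c_i$ is non-expendable, hence big with $g^{-1}(c_i)=\{u_{i+1}\}$ for some vertex $u_{i+1}$. For $j\geq 2$ I set $u_j$ to be this forced vertex realizing the arc $c_{j-1}\to c_j$, and for $j=1$ I pick any $u_1$ with $g(u_1)=c_0$ and $c_1\in L(u_1)$; the $u_j$ are pairwise distinct since they occupy pairwise distinct $g$-color-classes. Defining $g'(u_j)=c_j$ for each $j$ and $g'(v)=g(v)$ otherwise, a case check using the simplicity of the path and $d\notin\mathrm{image}(g)$ confirms that $g'$ is a proper $L$-coloring. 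Only $c_0$'s class shrinks, by a single vertex, and since $c_0$ is expendable no color becomes deficient; the only color whose class grows is $d$, going from $0$ to $1$, which makes $d$ non-deficient and not excessive. Hence $g'$ has at most $t$ excessive colors and strictly fewer deficient colors than $g$, contradicting the choice of $g$; the ``in particular'' clause then follows by taking $t=0$. I expect the main obstacle to be the two-way count: it is crucial that $A_c\subseteq V_T$ hold for \emph{every} $c\in T$ (including $c=d$), since without the $c=d$ case the bound on $\sum_{c\in T}\eta(c)$ would be weaker and the contradiction would disappear.
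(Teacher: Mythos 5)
Your proof is correct and is essentially the paper's own argument: the same extremal choice of coloring, the same reachability set together with a double-counting contradiction showing that it meets an expendable color (the paper's set $Z$), and the same recoloring along a shortest directed chain; the only cosmetic difference is that your auxiliary digraph is built on the palette $\mathcal{L}$, whereas the paper's digraph $H$ has vertex set $V(G)$ with an arc $(u,w)$ whenever $f(u)\in L(w)$, and its counting step is phrased as a degree-sum identity on the induced subdigraph $H[S]$. Two harmless slips: a color $c\in T$ need only satisfy $\eta(c)\geq k$ (it can equal $k$ exactly, so it need not be ``big'' in your sense --- in particular a deficient color only satisfies $\eta(d)\geq k$), so your displayed chain should begin with $k|T|\leq\sum_{c\in T}\eta(c)$ rather than $(k+1)|T|$, and the contradiction $k|T|\leq k(|T|-1)$ still goes through; also, the path should be chosen shortest among \emph{all} directed paths from expendable colors of $T$ to $d$, not merely shortest from a prefixed $c_0$, in order to guarantee that every intermediate color is non-expendable.
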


\begin{proof}
Among all the proper $L$-colorings of $G$ that use at most $t$ colors in $\mathcal{L}$ excessively, let $f$ be such that the number of deficient colors with respect to $f$ is as small as possible.  For the sake of contradiction, suppose there is at least one deficient color with respect to $f$.

\par

Let $X = \{c \in \mathcal{L} : |f^{-1}(c)| < \lfloor \eta(c)/k \rfloor \}$, $Y = \{c \in \mathcal{L} : |f^{-1}(c)| = \lfloor \eta(c)/k \rfloor \}$, and $Z = \{c \in \mathcal{L} : |f^{-1}(c)| > \lfloor \eta(c)/k \rfloor \}$.  Note that for each $c \in \mathcal{L}$, $\lfloor \eta(c)/k \rfloor$ is 0 or 1 since $\eta(c) < 2k$.  So, the colors in $X$ have multiplicity at least $k$, and they are not used by $f$.  Therefore, $X$ is precisely the colors in $\mathcal{L}$ which are deficient with respect to $f$, and by our assumption, $|X| \geq 1$.

\par

We define an auxiliary digraph $H$ with $V(H)=V(G)$ as follows.  For each vertex $u \in V(H)$, let $(u,v) \in E(H)$ for each $v$ with $f(u) \in L(v)$ (note that we allow loops in this graph).  If $P$ is a directed path in $H$ with vertices (written in order): $v_1, v_2, \ldots, v_m$, by \emph{shifting colors along $P$} we mean that we modify the coloring $f$ as follows: we remove the color $f(v_1)$ from $v_1$ and for $2 \leq i \leq m$ we recolor $v_i$ with $f(v_{i-1})$.  By the way $H$ is constructed, the coloring we obtain after shifting colors along $P$ assigns to each vertex in $V(G) - \{v_1 \}$ a color in its corresponding list.

\par

Let $S_0$ be the set of vertices in $V(G)$ whose lists contain at least one deficient color with respect to $f$ (note that $|S_0| \geq k$).  Let $S$ be the set of vertices reachable from $S_0$ via a directed path in $H$ (note that we allow directed paths of length 0).  We claim that $f(S)$ must contain an element in $Z$.

\par

To see why, suppose for the sake of contradiction that $f(S) \subseteq Y$.  Let $H' = H[S]$.  By the way $S$ is defined, $d^+_{H'}(v) = d^+_{H}(v)$ for each $v \in S$.  For each $v \in S$, we know that $\eta(f(v)) \geq k$ which means that $k \leq d^+_{H}(v) = d^+_{H'}(v)$.  Also, for each $v \in S$,
$$d^-_{H'}(v) = | \{ u \in S : f(u) \in L(v) \} |.$$
Since each color in $f(S)$ is used exactly once by $f$ and $L$ is a $k$-assignment, $d^-_{H'}(v) \leq k$ for each $v \in S$.  Furthermore, since each vertex in $S_0$ contains at least one color never used by $f$ in its corresponding list, $d^-_{H'}(v) < k$ for each $v \in S_0$.  Thus, we have
$$k|S| \leq \sum_{v \in S} d^+_{H'}(v) = \sum_{v \in S} d^-_{H'}(v) = \sum_{v \in S_0} d^-_{H'}(v) + \sum_{v \in S-S_0} d^-_{H'}(v) < k(|S_0| + |S-S_0|) = k|S|$$
which is a contradiction.  We conclude that some element of $f(S)$ is in $Z$.

\par

Let $Z'$ be all the vertices in $S$ that are colored by $f$ with a color in $Z$.  Suppose that $P$ is a directed path (in $H$) of minimum length between $S_0$ and $Z'$.  Suppose that the vertices of $P$ (in order) are: $y_1, y_2, \ldots, y_m$.  Clearly, $y_1 \in S_0$, $f(y_m) \in Z$, and since $P$ is of minimum length, $f(y_i) \in Y$ for $i=1, \ldots, m-1$.  Suppose that $c$ is a deficient color in $L(y_1)$.  Let $f'$ be the coloring for $G$ obtained from $f$ by shifting colors along $P$, and coloring $y_1$ with $c$.  Since $c$ was never used by $f$, and each of: $f(y_1), \ldots, f(y_{m-1})$ is used exactly one time by $f$, $f'$ is a proper $L$-coloring of $G$, and the only color classes associated with $f$ and $f'$ that differ in size are the color classes corresponding to $c$ and $f(y_m)$.  Specifically, $f'$ uses $c$ one more time than $f$, and it uses $f(y_m)$ one less time than $f$.  So, $f(y_m)$ is not deficient with respect to $f'$, and $c$ is almost deficient with respect to $f'$.  We conclude that $f'$ is a proper $L$-coloring of $G$ that uses at most $t$ colors excessively, and there are fewer colors in $\mathcal{L}$ deficient with respect to $f'$ than with respect to $f$.  This is a contradiction.
\end{proof}

Lemma~\ref{lem: algorithm} gives us a way to remove deficient colors when our graph has small order.  Our next lemma will tell us that we can also avoid excessive colors when our graph satisfies further conditions.  Both these lemmas will allow us to easily prove Theorem~\ref{thm: smallorder}.

\begin{lem} \label{lem: halfsec}
Suppose $G$ is a graph, $l \geq 2$, and $k \geq \max\{l \Delta(G) , l|V(G)|/(2(l-1)) \}$.  If $L$ is a $k$-assignment for $G$, then there exists a proper $L$-coloring of $G$ that uses no color $c \in \mathcal{L}$ more than $\lceil \eta(c)/k \rceil$ times (i.e. excessively).
\end{lem}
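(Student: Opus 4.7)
Since $|V(G)| \leq 2k(l-1)/l < 2k$, every color $c \in \mathcal{L}$ satisfies $\eta(c) < 2k$, so $\lceil \eta(c)/k \rceil \in \{1,2\}$. Write $A = \{c \in \mathcal{L} : \eta(c) \leq k\}$ (capacity $1$) and $B = \{c \in \mathcal{L} : \eta(c) > k\}$ (capacity $2$); the task reduces to producing a proper $L$-coloring that uses each $A$-color at most once and each $B$-color at most twice.

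My plan is to build the coloring greedily. Process $V(G) = \{v_1, \ldots, v_n\}$ in an arbitrary order, and at step $j$ color $v_j$ with an \emph{admissible} color from $L(v_j)$: one that is not yet at capacity and not used by any already-colored neighbor of $v_j$. Among admissible colors, I apply the priority rule (i) prefer a $B$-color that has already been used exactly once, so as to saturate it; (ii) otherwise prefer an unused $B$-color; (iii) only use an $A$-color as a last resort. At step $j$ colored neighbors block at most $\Delta(G) \leq k/l$ colors of $L(v_j)$. Let $a$, $b_1$, $b_2$ count used $A$-colors, partially used $B$-colors, and saturated $B$-colors after step $j-1$; then $a + b_1 + 2b_2 = j-1$, so the total number of saturated colors is $a + b_2 = (j-1) - (b_1 + b_2)$. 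Thus step $j$ succeeds provided
\[
  \frac{k(l-1)}{l} + (b_1 + b_2) \geq j,
\]
which is automatic for $j \leq k(l-1)/l$ and reduces to $b_1 + b_2 \geq j - k(l-1)/l$ as $j$ approaches $n \leq 2k(l-1)/l$.

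The main obstacle is verifying this lower bound on $b_1 + b_2$. The priority rule ensures that an $A$-color is used at step $j$ only when every $B$-color in $L(v_j)$ is already saturated or blocked by a colored neighbor, a restrictive condition that ties the lists at $A$-forced steps back to the set of saturated $B$-colors. From $\sum_c \eta(c) = nk$ one obtains $|A| + 2|B| \geq n$, and a careful double count weighing each $A$-forced step against the $B$-colors that must have been saturated to force it should yield the required growth of $b_1 + b_2$; the delicate point is the interplay between the local blockages (controlled by $\Delta(G) \leq k/l$) and the global multiplicities (controlled by $n \leq 2k(l-1)/l$). If the direct greedy analysis proves too delicate, a fallback is to first apply a capacitated Hall's theorem---its hypothesis $\sum_{c \in N(S)} \lceil \eta(c)/k \rceil \geq |S|$ follows from $\sum_{c \in N(S)} \eta(c) \geq k|S|$---to produce an initial $L$-labelling with no over-used color, and then repair any monochromatic edges by shifting colors along directed paths in the auxiliary digraph on $V(G)$ with arcs $(u,v)$ when $f(u) \in L(v)$, analogously to the argument in the proof of Lemma~\ref{lem: algorithm}.
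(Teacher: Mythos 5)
There is a genuine gap: the one inequality on which your whole greedy argument rests is exactly the hard content of the lemma, and you do not prove it. You correctly reduce step $j$'s success to $b_1+b_2\geq j-k(l-1)/l$, but then only assert that ``a careful double count \ldots should yield the required growth.'' Note that your priority rule does not obviously help here: priorities (i) and (iii) leave $b_1+b_2$ unchanged, and only priority (ii) increases it, so you would need to bound the number of saturation steps and $A$-steps that occur before time $j$, i.e.\ bound $a+b_2\leq k(l-1)/l$ throughout the process. Nothing in the proposal establishes this, and it is not clear that a vertex ordering chosen ``arbitrarily'' admits such a bound; an adversarial order could force many early $A$-steps. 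The fallback is also not sound as stated: a capacitated Hall argument does give an $L$-labelling respecting the capacities (that part of your reasoning is fine), but the ``repair'' of monochromatic edges by shifting along directed paths is not analogous to Lemma~\ref{lem: algorithm}. There, properness is preserved automatically because every shifted color is used exactly once, and the shifting is used to fix color-class \emph{sizes}; here you would be shifting to fix \emph{properness}, and moving one occurrence of a capacity-$2$ color already used twice can create a new monochromatic edge, so neither correctness nor termination of the repair is established.

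The paper's proof takes a different and complete route: a minimal counterexample $G$, deletion of a single vertex $y$, and induction on $G-y$ to get a coloring $f$ with no excessive color. It then analyzes $L'(y)=L(y)\setminus\{f(u):u\in N_G(y)\}$, which has size at least $k(1-1/l)$. If some color of $L'(y)$ is below capacity, extend directly. Otherwise it splits into two cases: if some color $c^*$ has capacity $2$ but is used once ($D_1\neq\emptyset$), a counting argument ($|U|+|R|>n-1$ forces the sets to intersect) produces a single swap that frees a color for $y$; if not, a counting argument shows some color is unused but has capacity at least $1$ ($D_0\neq\emptyset$), and the auxiliary digraph with arcs $(u,v)$ for $f(u)\in L(v)$ is used to shift colors along a shortest path from $S_0$ toward $y$ or toward a doubly-used color, either finishing or reducing to the first case. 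Your digraph idea appears in the right spirit but in the wrong role, and without the deletion-plus-induction framework and the two counting arguments, the proof does not go through.
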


\begin{proof}

Suppose for the sake of contradiction that the lemma is false.  Let $G$ be a graph that violates the lemma with the smallest possible number of vertices.  We let $|V(G)|=n$.  Note that if $n$ is 1 or 2, $ \lceil \max\{l \Delta(G) , l|V(G)|/(2(l-1)) \} \rceil \geq n \geq \chi_{pc}(G)$ which means the lemma holds when $n=1,2$.  So, $n \geq 3$.  Let $L$ be a $k$-assignment with $k$ and $l$ satisfying the condition such that a proper $L$-coloring of $G$ satisfying the desired property does not exist.  We have that $n \leq 2k(1-1/l)$ and $\Delta(G) \leq k/l$.

\par

Suppose that $y \in V(G)$, and let $G' = G-y$ and $K = L|_{V(G')}$.  By the minimality of $G$ and the fact $\Delta(G') \leq \Delta(G)$ and $|V(G')|<|V(G)|$, there is a proper $K$-coloring of $G'$, $f$, such that for each $c \in \mathcal{L}$,
$$|f^{-1}(c)| \leq \left \lceil \frac{\eta_{K}(c)}{k} \right \rceil \leq \left \lceil \frac{\eta_{L}(c)}{k} \right \rceil \leq \left \lceil \frac{n}{k} \right \rceil \leq \left \lceil 2 - \frac{2}{l} \right \rceil \leq 2.$$
Now, for $i=1,2$, let $E_i$ denote the elements of $c \in \mathcal{L}$ that satisfy $|f^{-1}(c)| = i = \lceil \eta_L(c)/k \rceil$, and for $j=0,1$, let $D_j$ denote the elements of $c \in \mathcal{L}$ that satisfy $|f^{-1}(c)| = j < \lceil \eta_L(c)/k \rceil$.  It is easy to see that $\{E_1, E_2, D_0, D_1\}$ is a partition of $\mathcal{L}$.

\par

We let $L'(y) = L(y) - \{f(v) : v \in N_G(y) \}$.  Clearly $|L'(y)| \geq k - \Delta(G) \geq k(1-1/l)$.  Note that it must be the case that $L'(y) \cap (D_1 \cup D_0) = \emptyset$.  Otherwise we could simply color $y$ with some $c \in L'(y) \cap (D_1 \cup D_0)$ to complete a coloring with the desired property.  So, $L'(y) \subseteq E_1 \cup E_2$.  Let $U = \{v \in V(G') : f(v) \in L'(y) \}$.  Since each color in $L'(y)$ is used at least once by $f$, we have that $|U| \geq |L'(y)|$.  We will now derive a contradiction in two possible cases: (1) $D_1 \neq \emptyset$ and (2) $D_1 = \emptyset$.

\par

For case (1) suppose that $c^* \in D_1$ and $f(v^*) = c^*$.  Since $c^* \in D_1$, it follows that $\eta_L(c^*) \geq k+1$.  Now, let $R = \{v \in V(G) : c^* \in L(v), v \notin N_G[v^*] \}$.  We note that $R$ is a subset of $V(G')$ since $c^* \notin L'(y)$ implies $y \notin R$. Also, $|R| \geq k + 1 - 1 - \Delta(G) \geq k(1-1/l)$.  Now, we claim that $U$ and $R$ are not disjoint.  To see why this is so, suppose that $U$ and $R$ are disjoint subsets of $V(G')$.  Then
$$2k \left (1-\frac{1}{l} \right ) - 1 \geq n - 1 = |V(G')| \geq |U| + |R| \geq 2k \left (1-\frac{1}{l} \right )$$
which is a contradiction.  Suppose that $x \in V \cap R$.  Now, starting from $f$, recolor $x$ with $c^*$ and color $y$ with $f(x)$.  The resulting coloring is a proper $L$-coloring of $G$ with the desired property.  This is a contradiction which completes case (1).

\par

For case (2) we know that $D_1 = \emptyset$.  We claim that $D_0$ contains at least one element.  To see why this is so, note that:
$$2|E_2|+|E_1| = n - 1.$$
Also,
\begin{align*}
(2|E_2|+|E_1|+1)k = nk &= \sum_{e_2 \in E_2} \eta_L(e_2) + \sum_{e_1 \in E_1} \eta_L(e_1) + \sum_{d_0 \in D_0} \eta_L(d_0) \\
& \leq n |E_2| + k |E_1| + \sum_{d_0 \in D_0} \eta_L(d_0) \\
& \leq \frac{2k(l-1)}{l} |E_2| + k |E_1| + \sum_{d_0 \in D_0} \eta_L(d_0).
\end{align*}
This implies that:
$$2|E_2|+|E_1|+1 \leq \frac{2(l-1)}{l}|E_2| + |E_1| + \frac{1}{k}\sum_{d_0 \in D_0} \eta_L(d_0) \implies \frac{2}{l} |E_2| + 1 \leq \frac{1}{k}\sum_{d_0 \in D_0} \eta_L(d_0).$$
The last inequality immediately implies that $|D_0| \geq 1$.  Similar to the proof of Lemma~\ref{lem: algorithm}, we define an auxiliary digraph $H$ with $V(H)=V(G)$ as follows.  For each vertex $u \in V(H)$, let $(u,v) \in E(H)$ for each $v$ with $f(u) \in L(v)$.  Note that since $y$ is not in the domain of $f$, $d^+_H(y)=0$.

\par

Let $S_0$ be the set of vertices whose list contains some element of $D_0$, and let $S$ be the set of vertices reachable from $S_0$ by a directed path in $H$ (again we allow directed paths of length 0).  We claim that $y \in S$.  To see why this is so, suppose that $y \notin S$.  By the definition of $S$, each $c \in f(S)$ only appears in lists corresponding to vertices in $S$.  Let $G'' = G - S$, and $K' = L|_{V(G'')}$.  By the minimality of $G$, there is a proper $K'$-coloring, $f'$, of $G''$ such that $|f'^{-1}(c)| \leq \lceil \eta_{K'}(c)/k \rceil \leq \lceil \eta_{L}(c)/k \rceil$ for each $c \in \mathcal{L}$.  Moreover, $f'$ never uses any color in $f(S)$ since the multiplicity of each $c \in f(S)$ is 0 in $K'$.  Let $g$ be the proper $L$-coloring for $G$ given by
\[
  g(v) =
  \begin{cases}
                                   f'(v) & \text{if} \; v \in V(G)-S \\
                                   f(v) & \text{if} \; v \in S
  \end{cases}
\]
(note that we need to have $y \notin S$ for $g$ to be a proper $L$-coloring for $G$).  It is easy to see that for each $c \in \mathcal{L}$, $|g^{-1}(c)| \leq \lceil \eta_{L}(c)/k \rceil$ which contradicts the fact that $G$ violates the lemma.  We conclude that $y \in S$.

\par

Suppose that $P$ is a directed path (in $H$) of minimum length between $S_0$ and $E_2 \cup \{y\}$ (such a $P$ exists since $y \in S$).  Suppose that the vertices of $P$ (in order) are: $z_1, z_2, \ldots, z_m$.  Clearly, $z_1 \in S_0$, $z_m=y$ or $f(y_m) \in E_2$, and since $P$ is of minimum length, $f(y_i) \in E_1$ for $i=1, \ldots, m-1$.  Now, modify $f$ by shifting colors along $P$, and color $z_1$ with an element in $L(z_1) \cap D_0$.

\par

In the case $z_m = y$ we get a proper $L$-coloring for $G$ which satisfies the desired property which is a contradiction.  On the other hand, if $f(y_m) \in E_2$ we still have that $f$ is a proper $K$-coloring of $G'$ which satisfies the desired property, and we have that $f(y_m) \in D_1$.  Thus, we may get a contradiction by proceeding as we did in case (1) since we now have that $D_1 \neq \emptyset$.  This completes the proof.
\end{proof}

We now present the proof of Theorem~\ref{thm: smallorder}.

\begin{customthm}{\bf \ref{thm: smallorder}}
Any graph $G$ with $\Delta(G) \geq 1$ satisfies
$$\chi_{pc}(G) \leq \Delta(G) + \left \lceil \frac{|V(G)|}{2} \right \rceil.$$
\end{customthm}

\begin{proof}
Suppose $G$ is a graph with $\Delta(G) \geq 1$ and $k = \Delta(G) + \lceil |V(G)|/2 \rceil$.  Let $L$ be an arbitrary $k$-assignment for $G$.  We must show a proportional $L$-coloring of $G$ exists.  If $|V(G)| \leq \Delta(G) + \lceil |V(G)|/2 \rceil$, Propositions~\ref{pro: monoink} and~\ref{pro: order} yield the desired result.  So, assume that $|V(G)| > \Delta(G) + \lceil |V(G)|/2 \rceil$.  This implies that $|V(G)|/(2\Delta(G)) > 1$.  So, if we let $l = 1 + |V(G)|/(2\Delta(G))$, we see that Lemma~\ref{lem: halfsec} implies that there exists a proper $L$-coloring of $G$ which uses no color $c \in \mathcal{L}$ excessively.  Finally, since
$$\max_{c \in \mathcal{L}} \eta(c) \leq |V(G)| < 2 \Delta(G) + |V(G)| \leq 2k,$$
Lemma~\ref{lem: algorithm} implies that a proportional $L$-coloring of $G$ exists.
\end{proof}

It should be noted that by Proposition~\ref{pro: orderplus1}, the bound on $\chi_{pc}(G)$ in Theorem~\ref{thm: smallorder} can be taken to be $ \min \{|V(G)|-1, \Delta(G) + \lceil |V(G)|/2 \rceil \}$ if we know that $G$ is not a complete graph.

\section{Further Results via Matching Theory} \label{matching}

In this section we use matching theory to prove Theorems~\ref{thm: fullcomponents} and~\ref{thm: fullstars}.  We use the same terminology that was introduced at the beginning of Section~\ref{easyresults}: whenever $L$ is a $k$-assignment for $G$, for each $c\in \mathcal{L}$, write $\eta(c) = kq_c + r_c$ where $0 \leq r_c \leq k-1$.  We also let $V_c = \{v : c \in L(v) \}$.

\subsection{Proportional Choosability of Disconnected Graphs}  In this subsection we prove Theorem~\ref{thm: fullcomponents}.
In this subsection, given a $k$-assignment for $G$, we let $a_i^c$ denote the number of components of $G[V_c]$ that have order $i$.

\begin{lem} \label{lem: isolated}
Suppose $G$ is a graph that has no component of order greater than $k$ and $L$ is a $k$-assignment for $G$.  If
$$a_1^c \geq \sum_{j=2}^{k} (k-j) a_j^c$$
for each $c \in \mathcal{L}$, then there is a proportional $L$-coloring of $G$.
\end{lem}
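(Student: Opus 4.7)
The plan is to build a good huing of $(G,L)$ and then invoke Lemma~\ref{lem:hues} to extract a proportional $L$-coloring of $G$. The hueing step amounts to partitioning, for each color $c \in \mathcal{L}$, the set $V_c$ into $q_c$ groups of size exactly $k$ (the non-scarce hues of $c$) and, if $r_c > 0$, one further group of size $r_c$ (the scarce hue of $c$). For the huing to be good, no edge of $G$ may have endpoints receiving different hues of the same color; equivalently, every component of $G[V_c]$ must be entirely contained in a single group. I will therefore think of the groups as bins of capacity $k$ (and possibly one of capacity $r_c$) into which whole components of $G[V_c]$ are packed.

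The packing is greedy. Fix $c \in \mathcal{L}$ and note that each component of $G[V_c]$ has order at most $k$, since components of $G[V_c]$ sit inside components of $G$ and the latter have order at most $k$ by hypothesis. Place each component of $G[V_c]$ of order $j \ge 2$ into its own distinct $k$-bin; this uses $\sum_{j\ge 2} a_j^c$ of the $k$-bins and leaves $k-j$ empty slots in each such bin. Then use order-$1$ components of $G[V_c]$ to fill these leftover slots; distribute any remaining order-$1$ components into the still-empty $k$-bins and, if $r_c > 0$, into the scarce $r_c$-bin.

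The verification has three parts. (i) The hypothesis $a_1^c \ge \sum_{j=2}^k (k-j) a_j^c$ says precisely that there are enough order-$1$ components to fill the gaps created by the non-trivial components. (ii) There are enough $k$-bins to hold all non-trivial components, i.e., $\sum_{j\ge 2} a_j^c \le q_c$: adding $\sum_{j\ge 2} j a_j^c$ to both sides of the hypothesis gives $k\sum_{j\ge 2} a_j^c \le a_1^c + \sum_{j\ge 2} j a_j^c = \eta(c) = kq_c + r_c$, and since $r_c < k$ and the left side is a multiple of $k$, the bound $\sum_{j\ge 2} a_j^c \le q_c$ follows. (iii) The totals balance: the total capacity of all bins is $kq_c + r_c = \eta(c) = \sum_j j a_j^c$, which equals the total number of vertices being packed, so the greedy placement exactly fills every bin. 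Because hues are assigned at the level of whole components of $G[V_c]$, any edge of $G$ with both endpoints in $V_c$ has its endpoints in the same component of $G[V_c]$ and hence sharing a hue; thus the huing is good, and Lemma~\ref{lem:hues} produces the desired proportional $L$-coloring.

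The only real content is the arithmetic in step (ii); everything else is bookkeeping to package the components cleanly and then appeal to the good-huing machinery already established.
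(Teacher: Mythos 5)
Your proof is correct and follows essentially the same route as the paper: introduce one hue per nontrivial component of $G[V_c]$, pad it with $k-j$ isolated vertices of $V_c$ (which the hypothesis guarantees are available), hue the leftover isolated vertices arbitrarily, observe the huing is good, and invoke Lemma~\ref{lem:hues}. Your extra arithmetic (checking $\sum_{j\ge 2} a_j^c \le q_c$ and that the totals balance) just makes explicit what the paper leaves implicit.
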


\begin{proof}
We hue $L$ as follows.  For each $c \in \mathcal{L}$ and each component $H$ of $G[V_c]$ with $j\geq2$ vertices, we introduce one hue of $c$.  For every vertex in $H$ together with $k-j$ isolated vertices of $V_c$, we replace the occurrence of $c$ by this hue.  The inequality in the hypothesis ensures that we have enough isolated vertices to add to each nontrivial component so that each resulting hue has multiplicity $k$.  Finally, if there are isolated vertices remaining, we hue the appearance of $c$ in their lists arbitrarily (always with multiplicity $k$ except perhaps for one scarce hue).

Notice that by construction, this is a good huing.  So, the desired result follows by Lemma~\ref{lem:hues}.
\end{proof}

We are now ready to prove Theorem~\ref{thm: fullcomponents}.  For the proof, if $G$ has no components of order greater than $k$ and $L$ is a $k$-assignment for $G$, then for each $c \in \mathcal{L}$, we let
$$\delta(L,c) = -a_1^c + \sum_{j=2}^{k} (k-j) a_j^c \; \; \text{and} \; \; \sigma(G,L) = \sum_{c \in \mathcal{L}} \max \{\delta(L,c) , 0 \}.$$

\begin{customthm} {\bf \ref{thm: fullcomponents}}
If $G$ is a graph such that its largest component has $k$ vertices, then $\chi_{pc}(G) \leq k$.
\end{customthm}

\begin{proof}
Suppose the theorem is false.  Among all the graphs with components of order at most $k$ that are not proportionally $k$-choosable, choose a graph $G$ along with a $k$-assignment, $L$, for $G$ such that there is no proportional $L$-coloring of $G$ and $\sigma(G,L)$ is as small as possible.

\par

By Lemma~\ref{lem: isolated} we know that $\sigma(G,L) > 0$.  This means that there is a $c \in \mathcal{L}$ such that $\delta(L,c) > 0$.  Let $z_1, z_2, \ldots, z_{k-1}$ be new colors such that $\{z_1, \ldots, z_{k-1} \} \cap \mathcal{L} = \emptyset$.  We form the graph $G'$ from $G$ by adding $k$ isolated vertices to $G$ called: $v_1, v_2, \ldots, v_k$.  Clearly, the components of $G'$ have order at most $k$.  Now, let $L'$ be the $k$-assignment for $G'$ given by $L'(v)=L(v)$ when $v \in V(G)$ and $L'(v_i) = \{z_1, z_2, \ldots, z_{k-1}, c \}$ for $i = 1, \ldots, k$.

\par

Let $\mathcal{L}'$ be the palette of colors associated with $L'$.  Clearly, $\mathcal{L}' = \mathcal{L} \cup \{z_1, \ldots, z_{k-1} \}$.  Note that for each $x \in \mathcal{L} - \{c \}$, $\delta(L',x) = \delta(L,x)$.  Moreover, $\delta(L',z_i) = -k < 0$ for each $i= 1, \ldots, k-1$.  Finally, $\delta(L',c) = \delta(L,c) - k < \delta(L,c).$  It follows that $\sigma(G',L') < \sigma(G, L)$.  Thus, there is a proportional $L'$-coloring, $f$, of $G'$.  Since $\eta_{L'}(z_i)=k$ for each $i=1, \ldots, k-1$, we have that $f(\{v_1, \ldots, v_k \}) = \{z_1, z_2, \ldots, z_{k-1}, c \}.$  It follows that restricting $f$ to $V(G)$ yields a proportional $L$-coloring of $G$ which is a contradiction.
\end{proof}

\begin{customcor} {\bf \ref{cor: fullcliques}}
Suppose that $G$ is a disjoint union of cliques and the largest component of $G$ has $t$ vertices.  Then, $\chi_{pc}(G)=t$.
\end{customcor}

\begin{proof}
Since $G$ has a component isomorphic to $K_t$, Proposition~\ref{lem: monotone} and the fact that $K_t$ is not even $t-1$-colorable implies that $\chi_{pc}(G) > t-1$.  Theorem~\ref{thm: fullcomponents} and the fact that $G$ has no components of order greater than $t$ implies that $G$ is proportionally $t$-choosable.
\end{proof}

\subsection{Proportional Choosability of Stars}  In this subsection we completely determine the proportional choosability of stars by proving Theorem~\ref{thm: fullstars}.

\begin{customthm} {\bf \ref{thm: fullstars}}
$\chi_{pc}(K_{1,m}) = 1 + \lceil m/2 \rceil$.
\end{customthm}

\begin{proof}
Throughout, let $G=K_{1,m}$, with $v$ as the central vertex and $Z$ the set of leaves.

By Corollary~\ref{cor: maxdegreesec}, we know that
$$\chi_{pc}(G) > \frac{1 + \Delta(G)}{2} = \frac{1}{2} + \frac{m}{2} \geq \left \lceil \frac{m}{2} \right \rceil.$$
So, we have that $\chi_{pc}(G) \geq 1 + \lceil m/2 \rceil$.

Now, suppose that $k = 1 + \lceil m/2 \rceil$, and $L$ is an arbitrary $k$-assignment for $G$.  We will show that there is a proportional $L$-coloring for $G$.  By Propositions~\ref{pro: orderplus1}, we may assume that $k \leq m-1$.  So, $1 + m/2 \leq k \leq m-1$ which means $m \geq 4$ and $k \geq 3$.  We will prove the desired by considering 2 cases: (1)~all colors in $L(v)$ have multiplicity at least $k+1$ and (2)~there exists $a \in L(v)$ such that $\eta(a) \leq k$.

For case~(1), suppose that $L(v) = \{c_1, c_2, \ldots, c_k \}$.  We begin by constructing a proper $L$-coloring of $G$ as follows.  We color $v$ with $c_1$.  Then, since each color in $L(v)$ appears in at least $k$ lists corresponding to the vertices in $Z$, we may color 2 vertices in $Z$, say $z_1$ and $z_2$, with $c_2$.  We then proceed inductively.  Specifically, for each $r= 3, 4, \ldots, k$, there are at least $k-r+1$ lists corresponding to vertices in $Z - \{z_1, z_2, \ldots, z_{r-1} \}$ that contain $c_r$.  So, we may color a vertex in $Z - \{z_1, z_2, \ldots, z_{r-1} \}$, say $z_r$, with $c_r$.  Now, notice we have used each color in $L(v) - \{c_2 \}$ once, and we have used $c_2$ twice.  Moreover, we have colored $k$ vertices in $Z$.  So, there are $m-k$ vertices in $Z$ that are yet to be colored.  Let $Z'$ be the set of uncolored vertices in $Z$.  We note that:
$$1 \leq m - k \leq m - \left ( \frac{m}{2} + 1 \right ) = \frac{m}{2} - 1 \leq k - 2$$
which means that $1 \leq |Z'| \leq k - 2.$  Now, for each $z \in Z'$, we let $L'(z) = L(z) - \{c_1, c_2 \}$.  Notice $|L'(z)| \geq k-2$ for each $z \in Z'$.  Since $|Z'| \leq k-2$, it is possible to greedily color the vertices in $Z'$ with $|Z'|$ pairwise distinct colors.  Notice that the resulting coloring, $f$, is a proper $L$-coloring for $G$, which uses no color more than twice.  Furthermore, the only colors used twice by $f$ must be in $L(v) - \{c_1 \}$.  This means that for each $c \in \mathcal{L}$, $|f^{-1}(c)| \leq \lceil \eta(c)/k \rceil$ (i.e.\ $f$ uses no color in $\mathcal{L}$ excessively).  Since $\max_{c \in \mathcal{L}} \eta(c) \leq m+1 < m+2 \leq 2k$, Lemma~\ref{lem: algorithm} implies that there is a proportional $L$-coloring for $G$.

For case~(2), take an arbitrary huing of $L$.  By Lemma~\ref{lem:hues}, there is a proportional $L$-labelling $f$ of $G$ with $f(v)=a$.  Since $\eta(a)\leq k$, no other vertex receives color $a$, and so this coloring is a proportional $L$-coloring of $G$.
\end{proof}

\end{document}